\newtheorem{theorem}{Theorem}[section]
\newtheorem{proposition}[theorem]{Proposition}
\theoremstyle{definition}
\newtheorem{definition}[theorem]{Definition}
\theoremstyle{remark}
\newtheorem{remark}[theorem]{Remark}
\numberwithin{equation}{section}
\newcommand{\K}{\mathbb{K}}
\title[]{Classification of $3$-dimensional Bihom-Associative and Bihom-Bialgebras.}
\author{Ahmed Zahari}
\address{Universit\'{e} de Haute Alsace,  Laboratoire de Math\'{e}matiques, Informatique et Applications,
4, rue des Fr\`eres Lumi\`ere F-68093 Mulhouse, France}
\email{zaharymaths@gmail.com}
\keywords{BiHom-associative algebra, BiHom-Bialgebra,  BiHom-Hopf algebra, Classification.}
\begin{document}

\begin{abstract}
   The purpose of this paper is to study the structure and the algebraic varieties of BiHom-associative algebras. We provide  a classification of $n$-dimensional BiHom-associative  and BiHom-bialgebras and 
   BiHom Hopf algebras for  $n\leq 3.$ 
 \end{abstract}

\maketitle
\section*{Introduction}\label{sec:intro} 

The first motivation to study nonassociative BiHom-algebras comes from quasi-deformations of Lie algebras of vector fields, in particular $q$-deformations of Witt and Virasoro algebras. It was observed in the pioneering works, mainly by physicists,  that in these examples a twisted Jacobi identity holds. Motivated by these examples and their generalization on the one hand, and the desire to be able to treat within the same framework such well-known generalizations of Lie algebras as the color and Lie superalgebras on the other hand, quasi-Lie algebras and subclasses of quasi-hom-Lie algebras and hom-Lie algebras were introduced by Hartwig, Larsson and Silvestrov in \cite{HLS,LS}.
The BiHom-associative algebras play the role of associative algebras in the BiHom-Lie setting. Usual functors between the categories of Lie algebras and associative algebras were extended to Hom-setting, see \cite{Yau-Env2008} for the construction of the enveloping algebra of a BiHom-Lie algebra.

A  Hom-associative algebra $(A, \mu, \alpha, \beta)$  consists of a vector space, a multiplication and two linear endomorphism. It may be viewed as a deformation of an associative algebras, in which the associativity condition is twisted by the linear maps $\alpha$ and $\beta$ in a certain way such that when $\alpha=id$ and $\beta=id$, the BiHom-associative algebras degenerate to exactly  associative algebras. 
In this paper we aim to study the structure of BiHom-associative algebras.
 Let $A$ be an $n$-dimensional $\mathbb{K}$-linear space and  $\left\{e_1, e_2, \cdots, e_n\right\}$ be a basis of $A$. A BiHom-algebra structure on $A$ with product $\mu$ is determined by $n^3$ structure constants $\mathcal{C}_{ij}^k$, were $\mu(e_i, e_j)=\sum^n_{k=1}\mathcal{C}_{ij}^ke_k$ and by $\alpha$ and $\beta$ which is given by ${2n^2}$ structure constants $a_{ij}$ and $b_{ij}$, where 
$\alpha(e_i)=\sum_{j=1}^na_{ji}e_j$ and $\beta(e_j)=\sum_{k=1}^nb_{kj}e_k$. Requiring the algebra structure to be BiHom-associative and unital  gives rise to sub-variety $\mathcal{H}_n$ (resp. $\mathcal{H}u_n$) of $k^{n^3+2n^2}$. Base changes in $A$ result in the natural transport of the structure action of $GL_n(k)$ on $\mathcal{H}_n$. Thus the isomorphism classes of 
$n$-dimensional BiHom-algebras are in one-to-one correspondence with the orbits of the action of $GL_n(k)$ on $\mathcal{H}_n$ 
(rep. $\mathcal{H}u_n$). \\
Furthermore, we shall consider the class of BiHom-bialgebras which are
Hom-associative algebras equipped with a compatible BiHomCoalgebra structure,
in particular BiHomHopf algebras where the additional structures like
the comultiplication $\Delta$ and the counit $\epsilon$ can be expressed in a base in a similar way
as the multiplication above. We shall also give a classification
of these algebras up to isomorphism in low dimension $n\leq 3$.

The paper is organized as follows. In the first section we give the  basics about BiHom-associative algebras and provide some new properties. Moreover, we discuss unital BiHom-associative algebras. In Section $2$ is dedicated to  describe algebraic varieties of BiHom-associative algebras and provide classifications, up to isomorphism, of $2$-dimensional (resp. $3$-dimensional) BiHom-associative algebras. Moreover, in Section 3
we shall recall the definitions of BiHom-bialgebras and BiHomHopf algebras
and present the classification up to dimension $3$.

\vspace{0.5cm}

\noindent \textbf{Acknowledgements}. I'd like to thank my thesis
supervisor Abdenacer Makhlouf and Martin Bordomann for giving me the problem, for
many fruitful discussions and for their constant support.

\section{Structure of BiHom-associative algebras}\label{sec:prel}
Let $\mathbb{K}$ be an algebraically closed field  of characteristic $0$, $A$ be a linear space over $\mathbb{K}$. We refer to a Hom-algebra by a 
$4$-tuple $(A,\mu,\alpha, \beta)$, where $\mu:A\times A\rightarrow A$ is a bilinear map (multiplication) and   $\alpha$ and $\beta$ be two homomorphisms of $A$ (twist map). 

\subsection{Definitions}
\begin{definition}\cite{MS}.
A BiHom-associative algebra is a $4$-tuple $(A, \mu, \alpha, \beta)$ consisting of a linear space $A$, a bilinear map $\mu : A\times A\rightarrow A$ and 
two linear space homomorphism $\alpha : A\rightarrow A$ and $\beta : A\rightarrow A$ satisfying 
\begin{equation}\label{Homassoc}
\mu(\alpha(x), \mu(y,z))=\mu(\mu(x,y), \beta(z)).
\end{equation}
\begin{equation}
\alpha(\mu(x,y))= \mu(\alpha(x),\alpha(y))\quad and \quad \beta(\mu(x,y))= \mu(\beta(x),\beta(y)).
\end{equation}
\begin{equation}
\alpha\circ\beta=\beta\circ\alpha.
\end{equation}
Usually such BiHom-associative algebras are called multiplicative. Since we are dealing only with multiplicative  BiHom-associative algebras, we shall call them BiHom-associative algebras for simplicity. We denote the set of all BiHom-associative algebras by $\mathcal{H}$. 
In the language of Hopf algebras, the multiplication of a BiHom-associative algebra over $A$ consists of a
linear map $\mu : A\otimes A\rightarrow A$, and Condition \eqref{Homassoc} can be written as
\begin{equation}
\mu(\alpha(x)\otimes\mu(y\otimes z))=\mu(\mu(x\otimes y)\otimes\beta(z)).
 \end{equation}
\end{definition}

\begin{definition}
Let $(A, \mu_A, \alpha_A, \beta_A)$ and $(B, \mu_B, \alpha_B, \beta_B)$ be two BiHom-associative algebras. A linear map $\varphi : A\rightarrow B$ is called a BiHom-associative algebras morphism if
\begin{equation}\label{d2} 
\varphi\circ\mu_A=\mu_B(\varphi\otimes\varphi), \quad\alpha_B\circ\varphi=\varphi\circ\alpha_A\quad and\quad\beta_B\circ\varphi=\varphi\circ\beta_A.
\end{equation} 
\end{definition}
In particular, BiHom-associative algebras $(A,\mu_A,\alpha_A, \beta_A)$ and $(B, \mu_B,\alpha_B, \beta_B)$ are isomorphic if $\varphi$ is also bijective.
\begin{definition}\label{d}
 A unital BiHom-associative algebra $(A, \mu, \alpha, \beta,u)$ is called unital if there exists an element $u_A\in A$ (called a unit) such that
 $\alpha(u_A)=u_A, \quad \beta(u_A)=u_A$ and $au_A=\alpha(a)\quad$ and $\quad u_Aa=\beta(a)$, $\quad \forall a\in A$. 
 \end{definition} 
A morphism of unital BiHom-associative algebras $\phi : A\longrightarrow B$ is called unital if $\phi(u_A)=u_B.$

\subsection{Structure of BiHom-associative algebras}\  \\
We state in this section some properties on the structure of BiHom-associative algebras which are not necessarily multiplicative.
\begin{proposition}[\cite{Yau}]
Let $(A, \mu,\alpha, \beta)$ be a BiHom-associative algebra and $\gamma : A\rightarrow A$ be a BiHom-associative algebra morphism. 
Then $(A,\gamma\mu,\gamma\alpha,\gamma\beta)$ is a BiHom-associative algebra. 
\end{proposition}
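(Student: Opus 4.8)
The plan is to check directly that the twisted triple of structure maps $\mu'=\gamma\circ\mu$, $\alpha'=\gamma\circ\alpha$, $\beta'=\gamma\circ\beta$ satisfies the three axioms in the definition of a (multiplicative) BiHom-associative algebra. Throughout I would use only the defining properties of a morphism $\gamma\colon A\to A$, namely
\[
\gamma\circ\mu=\mu\circ(\gamma\otimes\gamma),\qquad \gamma\circ\alpha=\alpha\circ\gamma,\qquad \gamma\circ\beta=\beta\circ\gamma,
\]
together with the fact that $(A,\mu,\alpha,\beta)$ is itself BiHom-associative and multiplicative. The single computational observation that does all the work is that the morphism property lets one pull every occurrence of $\gamma$ out to the left of a product and collect it, so that each side of each identity becomes $\gamma^{2}$ applied to the corresponding expression built from $\mu,\alpha,\beta$ alone.

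Concretely, for the BiHom-associativity condition \eqref{Homassoc} I would compute the left-hand side
\[
\mu'\bigl(\alpha'(x),\mu'(y,z)\bigr)=\gamma\mu\bigl(\gamma\alpha(x),\gamma\mu(y,z)\bigr)=\gamma\bigl(\gamma\mu(\alpha(x),\mu(y,z))\bigr)=\gamma^{2}\mu\bigl(\alpha(x),\mu(y,z)\bigr),
\]
where the middle step is the multiplicativity of $\gamma$ applied to the arguments $\alpha(x)$ and $\mu(y,z)$. In exactly the same way the right-hand side equals $\gamma^{2}\mu\bigl(\mu(x,y),\beta(z)\bigr)$, and the two are equal because $\mu\bigl(\alpha(x),\mu(y,z)\bigr)=\mu\bigl(\mu(x,y),\beta(z)\bigr)$ holds in $A$. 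For the multiplicativity of $\alpha'$ one has $\alpha'\bigl(\mu'(x,y)\bigr)=\gamma\alpha\gamma\mu(x,y)=\gamma^{2}\alpha\mu(x,y)=\gamma^{2}\mu(\alpha(x),\alpha(y))$, using $\alpha\gamma=\gamma\alpha$ and the multiplicativity of $\alpha$; and $\mu'\bigl(\alpha'(x),\alpha'(y)\bigr)=\gamma\mu(\gamma\alpha(x),\gamma\alpha(y))=\gamma^{2}\mu(\alpha(x),\alpha(y))$ by multiplicativity of $\gamma$, so the two agree. The identity for $\beta'$ is verbatim the same with $\beta$ in place of $\alpha$. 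Finally $\alpha'\circ\beta'=\gamma\alpha\gamma\beta=\gamma^{2}\alpha\beta=\gamma^{2}\beta\alpha=\gamma\beta\gamma\alpha=\beta'\circ\alpha'$, using that $\gamma$ commutes with both $\alpha$ and $\beta$ and that $\alpha\beta=\beta\alpha$.

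I do not expect any genuine obstacle here: the statement is the BiHom analogue of Yau's twisting principle, and once the morphism relations are used to slide the $\gamma$'s to the left, every identity reduces to one already available in $(A,\mu,\alpha,\beta)$. The only point requiring a little care is to keep track of which axiom of $\gamma$ (multiplicativity versus commutation with $\alpha$ or $\beta$) is invoked at each step, and to note that the argument never needs $\gamma$ to be invertible, so it applies to arbitrary algebra endomorphisms, not merely automorphisms.
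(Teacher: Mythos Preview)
Your verification is correct and complete. The paper does not actually supply a proof of this proposition; it is stated with a citation to Yau and left without argument, so there is nothing to compare against beyond noting that your direct check of the three axioms via the ``slide all $\gamma$'s to the left and collect them as $\gamma^{2}$'' trick is exactly the standard proof of Yau's twisting principle in this setting.
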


\begin{definition}\label{Defs1}
Let $(A, \mu, \alpha,\beta)$ be a BiHom-associative algebra. If there is an associative algebra $(A, \mu')$ such that 
$\mu(x,y)=(\alpha\otimes\beta)\mu'(x,y),\, \forall x,y\in A$, we say that $(A, \mu, \alpha,\beta)$ is of associative type and $(A,\mu')$ is its compatible associative algebra or the untwist of $(A, \mu, \alpha,\beta)$.  
\end{definition}

\begin{proposition}\label{p1}
Let $(A, \mu, \alpha, \beta)$ be an $n$-dimensional BiHom-associative algebra and $\phi : A\rightarrow A$ be an invertible linear map. Then there is an
isomorphism with an n-dimensional BiHom-associative algebra  $(A, \mu', \phi\alpha\phi^{-1}, \phi\beta\phi^{-1})$ where 
$\mu'=\phi\circ\mu\circ(\phi^{-1}\otimes\phi^{-1}$). Furthermore, if $\left\{C^k_{ij}\right\}$ are the structure constants of $\mu$ with
respect to the basis $\left\{e_1,\dots,e_n\right\}$,  then $\mu'$ has the same structure constants with respect to the basis 
$\left\{\phi(e_1),\dots,\phi(e_n)\right\}$.
\end{proposition}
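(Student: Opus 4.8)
The plan is to verify directly that the map $\phi$ is a BiHom-associative algebra isomorphism between $(A,\mu,\alpha,\beta)$ and $(A,\mu',\phi\alpha\phi^{-1},\phi\beta\phi^{-1})$, and then to read off the structure constants. First I would check that $(A,\mu',\phi\alpha\phi^{-1},\phi\beta\phi^{-1})$ is genuinely a BiHom-associative algebra: set $\alpha' = \phi\alpha\phi^{-1}$ and $\beta' = \phi\beta\phi^{-1}$, and substitute into \eqref{Homassoc}. Writing $\mu'(x,y) = \phi\mu(\phi^{-1}x,\phi^{-1}y)$, the left-hand side $\mu'(\alpha'(x),\mu'(y,z))$ becomes $\phi\mu\paraa{\alpha\phi^{-1}x,\ \mu(\phi^{-1}y,\phi^{-1}z)}$, and the right-hand side $\mu'(\mu'(x,y),\beta'(z))$ becomes $\phi\mu\paraa{\mu(\phi^{-1}x,\phi^{-1}y),\ \beta\phi^{-1}z}$; these agree because $(A,\mu,\alpha,\beta)$ satisfies \eqref{Homassoc} applied to $\phi^{-1}x,\phi^{-1}y,\phi^{-1}z$. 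The multiplicativity conditions $\alpha'\mu' = \mu'(\alpha'\otimes\alpha')$, similarly for $\beta'$, and the commutation $\alpha'\beta' = \beta'\alpha'$ all follow by the same conjugation bookkeeping from the corresponding identities for $(A,\mu,\alpha,\beta)$.

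Next I would verify the three morphism conditions of \eqref{d2} for $\varphi = \phi$. The multiplicativity condition $\phi\circ\mu = \mu'\circ(\phi\otimes\phi)$ is immediate from the definition $\mu' = \phi\circ\mu\circ(\phi^{-1}\otimes\phi^{-1})$, since $\mu'(\phi x,\phi y) = \phi\mu(\phi^{-1}\phi x,\phi^{-1}\phi y) = \phi\mu(x,y)$. The condition $\alpha'\circ\phi = \phi\circ\alpha$ reads $\phi\alpha\phi^{-1}\phi = \phi\alpha$, which is trivially true, and likewise $\beta'\circ\phi = \phi\circ\beta$. Since $\phi$ is assumed invertible, it is a bijective morphism, hence an isomorphism, which gives the first assertion.

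Finally, for the statement about structure constants, I would compute $\mu'(\phi(e_i),\phi(e_j))$ directly: by the multiplicativity relation just established, $\mu'(\phi(e_i),\phi(e_j)) = \phi(\mu(e_i,e_j)) = \phi\parab{\sum_{k=1}^n C_{ij}^k e_k} = \sum_{k=1}^n C_{ij}^k \phi(e_k)$. Hence the matrix of $\mu'$ in the basis $\pb{\phi(e_1),\ldots,\phi(e_n)}$ has exactly the entries $C_{ij}^k$, as claimed. There is no real obstacle here; the only thing to be careful about is keeping the conjugations consistent and remembering that $\mu'$ is built with $\phi^{-1}\otimes\phi^{-1}$ on the inside, so that the "new" basis $\phi(e_i)$ is precisely the one on which the conjugation cancels. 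The mildly tedious part is checking the two multiplicativity axioms for $\alpha'$ and $\beta'$, but these are purely formal once the substitution pattern is set up.
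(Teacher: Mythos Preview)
Your proposal is correct and follows essentially the same approach as the paper: both verify BiHom-associativity of $(A,\mu',\alpha',\beta')$ by conjugation, check multiplicativity of $\alpha'$ and $\beta'$, confirm that $\phi$ satisfies the morphism conditions \eqref{d2}, and compute $\mu'(\phi(e_i),\phi(e_j))=\sum_k C_{ij}^k\phi(e_k)$ directly. The only minor addition in your write-up is the explicit mention of $\alpha'\beta'=\beta'\alpha'$, which the paper leaves implicit.
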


\begin{proof}
We prove for any invertible linear map $\phi : A\rightarrow A, \, (A, \mu', \phi\alpha\phi^{-1}, \phi\beta\phi^{-1})$ is a BiHom-associative algebra.
$$\begin{array}{ll}
&\mu'(\mu'(x, y),\phi\beta\phi^{-1}(z))
=\phi\mu(\phi^{-1}\otimes\phi^{-1})(\phi\mu(\phi^{-1}\otimes\phi^{-1})(x,y),\phi\beta\phi^{-1}(z))\\
&=\phi\mu(\mu(\phi^{-1}(x),\phi^{-1}(y)),\beta\phi^{-1}(z))
=\phi\mu(\alpha\phi^{-1}(x),\mu(\phi^{-1}(y),\phi^{-1}(z)))\\
&=\phi\mu(\phi^{-1}\otimes\phi^{-1})(\phi\otimes\phi)(\alpha\phi^{-1}(x),\mu(\phi^{-1}\otimes\phi^{-1})(y,z)))\\
&=\phi\mu(\phi^{-1}\otimes\phi^{-1})(\phi\alpha\phi^{1}(x),\phi\mu(\phi^{-1}\otimes\phi^{-1})(y,z)))
=\mu'(\phi\alpha\phi^{-1}(x),\mu'(y,z)).
\end{array}$$
So $(A, \mu', \phi\alpha\phi^{-1}, \phi\beta\phi^{-1})$ is a BiHom-associative algebra.\\
It is also multiplicative. Indeed, for $\alpha$  
$$\begin{array}{ll}
&\phi\alpha\phi^{-1}\mu'(x,y)
=\phi\alpha\phi^{-1}\phi\mu(\phi^{-1}\otimes\phi^{-1})(x,y)
=\phi\alpha\mu(\phi^{-1}\otimes\phi^{-1})(x,y)\\
&=\phi\mu(\alpha\phi^{-1}(x),\alpha\phi^{-1}(y))
=\phi\mu(\phi^{-1}\otimes\phi^{-1})(\phi\otimes\phi)(\alpha\phi^{-1}(x),\alpha\phi^{-1}(y))
=\mu'(\phi\alpha\phi^{-1}(x),\phi\alpha\phi^{-1}(y)).
\end{array}$$
We have also for $\beta$
$$\begin{array}{ll}
&\phi\beta\phi^{-1}\mu'(x,y)
=\phi\beta\phi^{-1}\phi\mu(\phi^{-1}\otimes\phi^{-1})(x,y)
=\phi\beta\mu(\phi^{-1}\otimes\phi^{-1})(x,y)\\
&=\phi\mu(\beta\phi^{-1}(x),\beta\phi^{-1}(y))
=\phi\mu(\phi^{-1}\otimes\phi^{-1})(\phi\otimes\phi)(\beta\phi^{-1}(x),\beta\phi^{-1}(y))
=\mu'(\phi\beta\phi^{-1}(x),\phi\beta\phi^{-1}(y)).
\end{array}$$

Therefore $\phi : (A,\mu,\alpha,\beta)\rightarrow(A,\mu',\phi\alpha\phi^{-1}, \phi\beta\phi^{-1})$ is a BiHom-associative  algebras morphism, since\\ 
$\phi\circ\mu=\phi\circ\mu\circ(\phi^{-1}\otimes\phi^{-1})\circ(\phi\otimes\phi)=\mu'\circ(\phi\otimes\phi)$ and 
$(\phi\alpha\phi^{-1})\circ\phi=\phi\circ\alpha$ and $(\phi\beta\phi^{-1})\circ\phi=\phi\circ\beta.$ \\
It is easy to see that 
$\left\{\phi(e_i), \cdots, \phi(e_n)\right\}$ is a basis of $A$. For $i,j=1,\cdots,n$, we have 

$\begin{array}{ll}
\mu_2(\phi(e_i),\phi(e_j))
&=\phi\mu_1(\phi^{-1}(e_i), \phi^{-1}(e_j))=\phi\mu(e_i, e_j)=\sum_{k=1}^n\mathcal{C}^k_{ij}\phi(e_k).
\end{array}$ 
\end{proof}

\begin{remark}
A BiHom-associative algebra $(A,\mu,\alpha, \beta)$ is isomorphic to an associative algebra if and only if $\alpha=\beta=id$. Indeed,
$\phi\circ\alpha\circ\phi^{-1}=\phi\circ\beta\circ\phi^{-1}=id$ is equivalent to  $\alpha=\beta=id$.
\end{remark}

\begin{remark}
 Proposition \ref{p1} is useful to make a classification of BiHom-associative algebras. Indeed, we have to consider the class of morphisms which are conjugate. Representations of these classes are given by Jordan forms of the matrix. 
 Any $n\times n$ matrix over $\mathbb{K}$ is equivalent up to basis change  to  Jordan's canonical form, then we choose $\phi$ such that the matrix of $\phi\alpha\phi^{-1}=\gamma$ and $\lambda=\phi\beta\phi^{-1}$, where $\gamma$ and $\lambda$ are Jordan canonical forms. \\
Hence, to obtain the classification, we consider only Jordan forms for the structure map of Hom-associative algebras.
\end{remark}

\begin{proposition}
Let $(A, \mu, \alpha, \beta)$ be a Hom-associative algebra over $\mathbb{K}$. Let $(A, \mu', \phi\alpha\phi^{-1}, \phi\beta\phi^{-1})$ be 
its isomorphic Hom-associative algebra described in Proposition \ref{p1}. If $\psi$ is an automorphism of $(A, \mu, \alpha, \beta)$, then
$\phi\psi\phi^{-1}$ is an automorphism of $(A, \mu, \phi\alpha\phi^{-1}, \phi\beta\phi^{-1})$. 
\end{proposition}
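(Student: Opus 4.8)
The plan is to verify directly that the conjugated map $\phi\psi\phi^{-1}$ satisfies the three defining conditions \eqref{d2} of a morphism of BiHom-associative algebras from $(A,\mu',\phi\alpha\phi^{-1},\phi\beta\phi^{-1})$ to itself, and that it is bijective. Throughout I write $\mu'=\phi\circ\mu\circ(\phi^{-1}\otimes\phi^{-1})$, $\alpha'=\phi\alpha\phi^{-1}$, $\beta'=\phi\beta\phi^{-1}$, and $\psi'=\phi\psi\phi^{-1}$, and I use freely the hypothesis that $\psi$ is an automorphism of $(A,\mu,\alpha,\beta)$, i.e. $\psi\circ\mu=\mu\circ(\psi\otimes\psi)$, $\alpha\circ\psi=\psi\circ\alpha$, $\beta\circ\psi=\psi\circ\beta$, and $\psi$ invertible.

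First I would check multiplicativity with respect to $\mu'$. Expanding,
\[
\psi'\circ\mu' = \phi\psi\phi^{-1}\circ\phi\mu(\phi^{-1}\otimes\phi^{-1}) = \phi\psi\mu(\phi^{-1}\otimes\phi^{-1}) = \phi\mu(\psi\otimes\psi)(\phi^{-1}\otimes\phi^{-1}),
\]
using the automorphism property of $\psi$ in the last step. On the other hand,
\[
\mu'\circ(\psi'\otimes\psi') = \phi\mu(\phi^{-1}\otimes\phi^{-1})(\phi\psi\phi^{-1}\otimes\phi\psi\phi^{-1}) = \phi\mu(\psi\phi^{-1}\otimes\psi\phi^{-1}),
\]
and the two expressions agree. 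Next I would check compatibility with $\alpha'$: since $\alpha$ and $\psi$ commute,
\[
\alpha'\circ\psi' = \phi\alpha\phi^{-1}\phi\psi\phi^{-1} = \phi\alpha\psi\phi^{-1} = \phi\psi\alpha\phi^{-1} = \phi\psi\phi^{-1}\phi\alpha\phi^{-1} = \psi'\circ\alpha',
\]
and the identical computation with $\beta$ in place of $\alpha$ gives $\beta'\circ\psi' = \psi'\circ\beta'$. Finally, $\psi'=\phi\psi\phi^{-1}$ is a composition of invertible linear maps, hence invertible, so $\psi'$ is an automorphism of $(A,\mu',\alpha',\beta')$.

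There is no real obstacle here: every step is a short manipulation of compositions, and the only facts used are the definition of $\mu',\alpha',\beta'$ from Proposition \ref{p1} and the automorphism axioms for $\psi$. The one point to be careful about is bookkeeping with tensor-factor maps — writing $(\phi\otimes\phi)(\phi^{-1}\otimes\phi^{-1}) = \id\otimes\id$ and $(\psi\otimes\psi)(\phi^{-1}\otimes\phi^{-1}) = (\psi\phi^{-1})\otimes(\psi\phi^{-1})$ correctly — but this is entirely routine. I would also note in passing that the statement of the proposition as given writes the target as $(A,\mu,\phi\alpha\phi^{-1},\phi\beta\phi^{-1})$ whereas it should read $(A,\mu',\phi\alpha\phi^{-1},\phi\beta\phi^{-1})$; the proof is for the latter, consistently with Proposition \ref{p1}.
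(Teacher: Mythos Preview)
Your proof is correct and follows essentially the same direct-verification approach as the paper: both check commutation of $\phi\psi\phi^{-1}$ with $\phi\alpha\phi^{-1}$ and $\phi\beta\phi^{-1}$ via the cancellation $\phi^{-1}\phi=\id$, and both verify multiplicativity with respect to $\mu'$ by unwinding the definition. Your version is slightly more complete, since you explicitly note bijectivity and catch the typo $\mu\to\mu'$ in the target algebra, both of which the paper omits.
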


\begin{proof}
Note that $\gamma=\phi\alpha\phi^{-1}$. We have 
$$\phi\psi\phi^{-1}\gamma=\phi\psi\phi^{-1}\phi\alpha\phi^{-1}=\phi\psi\alpha\phi^{-1}=\phi\alpha\psi\phi^{-1}=\phi\alpha\phi^{-1}\phi\psi\phi^{-1}=\gamma\phi\psi\phi^{-1}$$.
For $\beta$, we pose $\lambda=\phi\beta\phi^{-1}$, we have\\
$$\phi\psi\phi^{-1}\lambda=\phi\psi\phi^{-1}\phi\beta\phi^{-1}=\phi\psi\beta\phi^{-1}=\phi\beta\psi\phi^{-1}=\phi\beta\phi^{-1}\phi\psi\phi^{-1}=\gamma\phi\psi\phi^{-1}.$$ 
For any $x,y\in A,$
$$\begin{array}{ll}
 \phi\psi\phi^{-1}\mu'(\phi(x),\phi(y))&=\phi\psi\phi^{-1}\phi\mu(x,y)
 =\phi\psi\mu(x,y)=\phi\mu(\psi(x),\psi(y))\\
 &=\mu'(\phi\psi(x), \phi\psi(y))
 =\mu'(\phi\psi\phi^{-1}(\phi(x)), \phi\psi\phi^{-1}(\phi(y))).
 \end{array}$$
 By Definition, $\phi\psi\phi^{-1}$ is an automorphism of $(A, \mu', \phi\alpha\phi^{-1}, \phi\beta\phi^{-1})$.
 \end{proof}
The following characterization was given for Hom-Lie algebras in \cite{Sheng}.

\begin{proposition}
Given two BiHom-associative algebras $(A, \mu_A, \alpha_A, \beta_A)$ and $(B, \mu_B, \alpha_B ,\beta_B)$ over field $\mathbb{K}$, there is a 
BiHom-associative algebra $(A\oplus B, \mu_{A\oplus B}, \alpha_A+\alpha_B, \beta_A+\beta_B)$, where $\mu_{A\oplus B}$ the usual multiplication\\  
$(a+b)(a'+b')=(a,a')+ (b,b').$ 

$\mu_{A\oplus B}(., .) : (A\oplus B)\times (A\oplus B)\rightarrow (A\oplus B)$ is given by 
$$\mu_{A\oplus B}(a+b,a'+b')=(\mu_A(a,a'),\mu_B(b,b')),\, \forall\, a,a'\in A,\,\forall\, b,b'\in B,$$ and the linear map 
$(\alpha_A+\alpha_B, \beta_A+\beta_B) : A\oplus B \rightarrow A\oplus B$ is given by 
$$(\alpha_A+\alpha_B, \beta_A+\beta_B)(a,b)=((\alpha_A+\beta_A)a,(\alpha_B+\beta_B)b)\,\forall a\in A , b\in B.$$  
\end{proposition}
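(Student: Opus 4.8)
The plan is to verify directly that the four-tuple $(A\oplus B,\mu_{A\oplus B},\alpha_A+\alpha_B,\beta_A+\beta_B)$ satisfies all the axioms in the definition of a (multiplicative) BiHom-associative algebra, by reducing every identity componentwise to the corresponding identity in $A$ and in $B$. First I would fix notation: write an element of $A\oplus B$ as a pair $(a,b)$, let $\gamma:=\alpha_A+\alpha_B$ act by $\gamma(a,b)=(\alpha_A(a),\alpha_B(b))$ and $\delta:=\beta_A+\beta_B$ act by $\delta(a,b)=(\beta_A(b),\beta_B(b))$, and recall $\mu_{A\oplus B}((a,b),(a',b'))=(\mu_A(a,a'),\mu_B(b,b'))$. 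Since all three structure maps are defined coordinatewise, every composite appearing in the axioms splits as a pair whose first entry involves only $\mu_A,\alpha_A,\beta_A$ and whose second entry involves only $\mu_B,\alpha_B,\beta_B$.

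The key steps, in order, are: (1) BiHom-associativity \eqref{Homassoc}: compute $\mu_{A\oplus B}(\gamma(x),\mu_{A\oplus B}(y,z))$ and $\mu_{A\oplus B}(\mu_{A\oplus B}(x,y),\delta(z))$ for $x=(a,b)$, $y=(a',b')$, $z=(a'',b'')$; the first equals $(\mu_A(\alpha_A(a),\mu_A(a',a'')),\mu_B(\alpha_B(b),\mu_B(b',b'')))$ and the second equals $(\mu_A(\mu_A(a,a'),\beta_A(a'')),\mu_B(\mu_B(b,b'),\beta_B(b'')))$, so the two agree precisely because \eqref{Homassoc} holds in $A$ and in $B$. (2) Multiplicativity of $\gamma$: $\gamma(\mu_{A\oplus B}(x,y))=(\alpha_A\mu_A(a,a'),\alpha_B\mu_B(b,b'))=(\mu_A(\alpha_A a,\alpha_A a'),\mu_B(\alpha_B b,\alpha_B b'))=\mu_{A\oplus B}(\gamma(x),\gamma(y))$, using multiplicativity of $\alpha_A$ and $\alpha_B$; the argument for $\delta$ is identical. (3) Commutation $\gamma\circ\delta=\delta\circ\gamma$: this is immediate from $\alpha_A\beta_A=\beta_A\alpha_A$ and $\alpha_B\beta_B=\beta_B\alpha_B$ applied in each coordinate. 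Each of these is a one-line coordinatewise check once the projections are in place.

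There is no genuine obstacle here — the statement is a routine "direct sum of structures is a structure" verification, and the only thing to be careful about is bookkeeping: making sure the maps $\alpha_A+\alpha_B$ and $\beta_A+\beta_B$ really are interpreted as the block-diagonal (coordinatewise) maps on $A\oplus B$, so that they preserve the two summands and hence commute with the coordinatewise multiplication. The slight infelicity in the statement (writing $(\alpha_A+\alpha_B,\beta_A+\beta_B)(a,b)=((\alpha_A+\beta_A)a,(\alpha_B+\beta_B)b)$) should be read as defining the two twist maps $\gamma$ and $\delta$ separately by $\gamma(a,b)=(\alpha_A a,\alpha_B b)$ and $\delta(a,b)=(\beta_A b,\beta_B b)$; with that reading all axioms follow as above. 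If one instead wanted the single map $\alpha_A+\beta_A$ on each factor to be the twist, one would additionally need each $\alpha_X+\beta_X$ to be multiplicative and to commute with itself, which is automatic, so the conclusion is unaffected. I would close by remarking that bijectivity, and hence the claim for isomorphisms of such direct sums, follows since $\gamma$ and $\delta$ are block-diagonal with the respective blocks invertible whenever $\alpha_A,\alpha_B,\beta_A,\beta_B$ are.
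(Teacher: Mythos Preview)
Your proposal is correct and follows essentially the same route as the paper: a direct componentwise verification that the BiHom-associativity identity reduces to the corresponding identities in $A$ and $B$. In fact you are more thorough, since the paper's proof only writes out the BiHom-associativity check and omits the multiplicativity and commutation verifications that you include; do fix the two small typos where you wrote $\delta(a,b)=(\beta_A(b),\beta_B(b))$ instead of $(\beta_A(a),\beta_B(b))$.
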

\begin{proof}
For any $a, a', a''\in A, \, b, b', b''\in B$, by direct computation, we get
$$\begin{array}{ll} 
&\mu_{A\oplus B}((\alpha_A+\beta_A,\alpha_B+\beta_B, )(a,b), \mu_{A\oplus B}(a'+b',a''+ b''))=\\
&=\mu_{A\oplus B}(((\alpha_A+\beta_A)a,(\alpha_B+\beta_B)b), (\mu_A(a',a''), \mu_B(b',b'')))\\
&=(\mu_A(((\alpha_A+\beta_A)a), \mu_A(a',a'')), \mu_B((\alpha_B+\beta_B)b,\mu_B(b',b'')))\\
&=(\mu_A(\mu_A(a,a'), (\alpha_A+\beta_A)a''),\mu_B(\mu_B(b,b'),(\alpha+\beta)b))\\
&=(\mu_{A\oplus B}(\mu_{A\oplus B}(a+ b,a'+b'),(\alpha_A+\beta_A, \alpha_B+\beta_B)(a'',b''))).
\end{array}$$
This ends the proof.
\end{proof}
\noindent A morphism of BiHom-associative algebras 
$\phi : (A, \mu_A, \alpha_A, \beta_A)\rightarrow (B, \mu_B, \alpha_B, \beta_B)$ is a linear map $\phi : A\rightarrow B$ such that
$\phi\circ\mu_A(a, b)=\mu_B\circ(\phi(a),\phi(b)),\forall a, b\in A,\quad \alpha_B\circ\phi=\phi\circ\alpha_A$ and $\beta_B\circ\phi=\phi\circ\beta_B$

Denote by $\xi_\phi\subset A\oplus B$, the graph of the linear map $\phi : A\rightarrow B.$
\begin{proposition}
A linear map $\phi : (A, \mu_A, \alpha_A, \beta_A)\rightarrow (B, \mu_B, \alpha_B, \beta_B)$ is a BiHom-associative algebras morphism if and only if the graph $\xi_\phi\subset A\oplus B$ is a BiHom-associative sub-algebras of $(A\oplus B, \mu_{A\oplus B}, \alpha_A+\alpha_B, \beta_A+\beta_B)$. 
\end{proposition}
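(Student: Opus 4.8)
The plan is to prove both implications by unwinding the definition of a BiHom-associative subalgebra applied to the graph $\xi_\phi = \{(a,\phi(a)) : a \in A\} \subset A \oplus B$. Recall that the direct sum carries the multiplication $\mu_{A\oplus B}(a+b, a'+b') = (\mu_A(a,a'), \mu_B(b,b'))$ and the twist maps $\alpha_A + \alpha_B$, $\beta_A + \beta_B$ acting componentwise; so $\xi_\phi$ is a BiHom-associative subalgebra precisely when it is closed under $\mu_{A\oplus B}$ and stable under both twist maps.

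First I would treat the twist-map conditions. For any $a \in A$, the element $(a, \phi(a))$ lies in $\xi_\phi$, and $(\alpha_A + \alpha_B)(a, \phi(a)) = (\alpha_A(a), \alpha_B(\phi(a)))$. This belongs to $\xi_\phi$ if and only if $\alpha_B(\phi(a)) = \phi(\alpha_A(a))$; requiring this for all $a$ is exactly the condition $\alpha_B \circ \phi = \phi \circ \alpha_A$. The identical argument with $\beta$ gives $\beta_B \circ \phi = \phi \circ \beta_A$. So stability under the twist maps is equivalent to the two intertwining conditions in the definition of a BiHom-associative morphism.

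Next I would handle the multiplicativity condition. For $a, a' \in A$ we compute $\mu_{A \oplus B}\big((a,\phi(a)), (a',\phi(a'))\big) = (\mu_A(a,a'), \mu_B(\phi(a), \phi(a')))$. This lies in $\xi_\phi$ if and only if its second component is the image under $\phi$ of its first component, i.e. $\mu_B(\phi(a), \phi(a')) = \phi(\mu_A(a, a'))$. Demanding this for all $a, a' \in A$ is precisely the condition $\phi \circ \mu_A = \mu_B \circ (\phi \otimes \phi)$. Conversely, if $\phi$ is a BiHom-associative morphism, then all three conditions hold and $\xi_\phi$ is closed under $\mu_{A\oplus B}$ and both twist maps; since $\xi_\phi$ is visibly a linear subspace (being the graph of a linear map), it is a BiHom-associative subalgebra. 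Collecting the two directions yields the equivalence.

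I do not expect a serious obstacle here: the statement is essentially a reformulation, and the only mild care needed is to observe that $\xi_\phi$ inherits the multiplicativity identities $\alpha_{A\oplus B}\circ \mu_{A\oplus B} = \mu_{A\oplus B}\circ(\alpha_{A\oplus B}\otimes \alpha_{A\oplus B})$ and the associativity identity \eqref{Homassoc} automatically from $A \oplus B$ once we know it is a subspace stable under the operations, so that no extra conditions beyond the three morphism identities are produced. The subtlest point is simply the bookkeeping that the second components of $(\alpha_A+\alpha_B)(a,\phi(a))$ and of $\mu_{A\oplus B}((a,\phi(a)),(a',\phi(a')))$ being forced to equal $\phi$ of the first components is an "if and only if", which is where the linearity of $\phi$ and the fact that $a$ ranges over all of $A$ are used.
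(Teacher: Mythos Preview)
Your proof is correct and follows essentially the same approach as the paper: compute the componentwise multiplication and twist maps on elements of the graph and observe that membership in $\xi_\phi$ is equivalent to the three morphism identities. The only cosmetic difference is that you treat the $\alpha$- and $\beta$-stability conditions separately, whereas the paper packages them together; your presentation is in fact a bit cleaner.
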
  
\begin{proof}
Let $\phi : (A, \mu_A, \alpha_A, \beta_A)\rightarrow (B, \mu_B, \alpha_B, \beta_B)$ be a  BiHom-associative algebra morphism. Then for any 
$a, b\in A,$ we have $$\mu_{A\oplus B}((a, \phi(a)), (b, \phi(b))=(\mu_A(a,b),\mu_B(\phi(a),\phi(b)))=(\mu_A(a,b),\phi\mu_A(a,b)).$$ 
Thus the graph $\xi_\phi$ is closed under the product $\mu_{A\oplus B}$. Furthermore, since $\alpha_B\circ\phi=\phi\circ\alpha_A$ and 
$\beta_B\circ\phi=\phi\circ\beta_A$
 we have 
$$(\alpha_A+\alpha_B, \beta_A+\beta_B)(a, \phi(a))=((\alpha_A+\beta_A)(a),(\alpha_B+\beta_B)\circ\phi(a))
=((\alpha_A+\beta_A)(a), \phi\circ(\alpha_A+\beta_A(a))),$$ which implies that 
$(\alpha_A+\alpha_B, \beta_A+\beta_B)\subset \xi_\phi.$ Thus $\xi_\phi$ is a BiHom-associative sub-algebra of
$(A\oplus B, \mu_{A\oplus B},\alpha_A+\alpha_B, \beta_A+\beta_B)$.

Conversely, if the graph $\xi_\phi\subset A\oplus B$ is a Hom-associative sub-algebra of 
$(A\oplus B, \mu_{A\oplus B}, \alpha_A+\alpha_B, \beta_A+\beta_B)$, then we have 
 $$\mu_{A\oplus B}((a,\phi(a)),(b,\phi(b))=(\mu_A(a,b),\mu_B(\phi(a),\phi(b))\in \xi_\phi,$$ which implies that 
 $\mu_B(\phi(a), \phi(b))=\phi\circ\mu_A(a,b).$ Furthermore, $(\alpha_A+\beta_A, \alpha_B+\beta_B)(\xi_\phi)\subset \xi_\phi$ yields that 
 $$(\alpha_A+\alpha_B, \beta_A+\beta_B)(a,\phi(a))=((\alpha_A+\beta_A)a, \phi\circ(\alpha_A+\beta_A)a)\in \xi_\phi,$$ which is equivalent to the condition 
$\alpha_B\circ\phi(a)=\phi\circ\alpha_A(a)$ and $\beta_B\circ\phi(a)=\phi\circ\beta_A(a).$ Therefore, $\phi$ is a  BiHom-associative algebra morphism. 
\end{proof}

 \subsection{Unital BiHom-associative algebras}\  \\
In this section we discuss unital BiHom-associative algebras.  We   denote by  $\mathcal{H}u_n$ the set of $n$-dimensional unital 
BiHom-associative algebras.
\begin{definition}
A BiHom-associative algebra $(A,\mu,\alpha, \beta)$ is called unital if there exists an element $u\in A$ such that 
$\mu(x, u)=\alpha(x)$ and $\mu(u, x)=\beta(x) $ for all $x\in A$.
\end{definition}
 
\begin{proposition}\label{p3}
Let  $(A, \mu,\alpha, \beta)$ be a BiHom-associative algebra. We set $\tilde{A}=span(A, u)$ the vector space generated by elements of 
$A$ and $u$. Assume  $\mu(x,u)=\alpha(x),\, \text{and}\, \mu(u, x)=\beta(x)\, \forall\, x\in A,\, \alpha(u)=u$ and $\beta(u)=u$. 
Then $(\tilde{A}, \mu,\alpha,\beta, u)$ is a unital
Hom-associative algebra.
\end{proposition}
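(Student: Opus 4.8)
The plan is to verify directly that the extended structure $(\tilde A,\mu,\alpha,\beta,u)$ satisfies all the axioms of a unital BiHom-associative algebra, namely: BiHom-associativity \eqref{Homassoc} on $\tilde A$, multiplicativity of $\alpha$ and $\beta$, commutativity $\alpha\beta=\beta\alpha$ on $\tilde A$, and the unit conditions from Definition \ref{d}. Since the unit conditions $\mu(x,u)=\alpha(x)$, $\mu(u,x)=\beta(x)$, $\alpha(u)=u$, $\beta(u)=u$ are hypotheses, the real content is checking that the four algebra axioms survive when one of the three arguments (or the argument of a twist map) is allowed to be $u$. Because $\tilde A=\operatorname{span}(A,u)$ and everything is multilinear, it suffices by linearity to check these identities when each of $x,y,z$ ranges over $A\cup\{u\}$; the case where all three lie in $A$ is exactly the given BiHom-associative structure on $A$.

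First I would record that $\alpha$ and $\beta$ are well defined on $\tilde A$ via $\alpha(u)=u=\beta(u)$, so $\alpha\beta(u)=u=\beta\alpha(u)$ and commutativity holds on all of $\tilde A$. Next, multiplicativity: I must check $\alpha(\mu(x,y))=\mu(\alpha(x),\alpha(y))$ in the cases $(x,u)$, $(u,y)$, $(u,u)$. For instance $\alpha(\mu(x,u))=\alpha(\alpha(x))$ while $\mu(\alpha(x),\alpha(u))=\mu(\alpha(x),u)=\alpha(\alpha(x))$; the case $(u,y)$ uses $\alpha(\mu(u,y))=\alpha(\beta(y))=\beta(\alpha(y))=\mu(u,\alpha(y))=\mu(\alpha(u),\alpha(y))$, where commutativity $\alpha\beta=\beta\alpha$ on $A$ is the key input; and $(u,u)$ is immediate. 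The same computations with $\beta$ in place of $\alpha$ dispose of $\beta$-multiplicativity.

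Then comes the associativity check $\mu(\alpha(x),\mu(y,z))=\mu(\mu(x,y),\beta(z))$, which I would split into the seven cases where at least one of $x,y,z$ equals $u$. When $z=u$ the right side is $\mu(\mu(x,y),\beta(u))=\mu(\mu(x,y),u)=\alpha(\mu(x,y))=\mu(\alpha(x),\alpha(y))$ (multiplicativity, just proved), and the left side is $\mu(\alpha(x),\mu(y,u))=\mu(\alpha(x),\alpha(y))$ — so they agree. Symmetrically $x=u$ uses $\beta$-multiplicativity. The case $y=u$ is the delicate one: the left side is $\mu(\alpha(x),\mu(u,z))=\mu(\alpha(x),\beta(z))$, and the right side is $\mu(\mu(x,u),\beta(z))=\mu(\alpha(x),\beta(z))$ — these coincide on the nose, which is precisely why the twisting in \eqref{Homassoc} is set up with $\alpha$ on the left and $\beta$ on the right. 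The remaining cases with two or three of the arguments equal to $u$ reduce to these or are trivial (e.g. all three equal to $u$ gives $\mu(u,u)=\mu(u,u)$). Finally the unit axioms of Definition \ref{d} hold by hypothesis, completing the verification.

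The main obstacle is bookkeeping rather than depth: one must make sure every mixed case is covered and that the only structural facts used about the original algebra $A$ — namely its BiHom-associativity, the multiplicativity of $\alpha,\beta$, and $\alpha\beta=\beta\alpha$ — are invoked correctly, with particular care in the $y=u$ case of associativity and the $(u,y)$ cases of multiplicativity, where the interplay between $\alpha$ and $\beta$ is essential.
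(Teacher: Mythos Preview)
Your proposal is correct and follows the same approach as the paper: a direct case-by-case verification of the BiHom-associativity (and the other axioms) when at least one argument equals $u$. The paper's own proof is much terser---it only exhibits the $z=u$ case of associativity and declares the rest ``straightforward''---so your version is essentially an expanded, more careful rendering of the same argument.
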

\begin{proof}
It is straightforward  to check the Hom-associativity. For example 
 
$\begin{array}{ll}
\mu(\mu(x,y), \beta(u))=\mu(\mu(x, y), u)=\alpha(\mu(x,y))=\mu(\alpha(x),\alpha(y))=\mu(\alpha(x),\mu(y,u)).
\end{array}$
\end{proof}
 \begin{remark}
 
Some unital BiHom-associative cannot be obtained as an extension of a non unital BiHom-associative algebra.
\end{remark}
\begin{remark}
Let $(A, \mu, \alpha, \beta, u)$ be an $n$-dimensional unital BiHom-associative algebra and $\phi : A\rightarrow A$ be an invertible linear map
such that $\phi(u)=u$. Then it is isomorphic to a $n$-dimensional BiHom-associative algebra $(A, \mu',\phi\alpha\phi^{-1},\phi\beta\phi^{-1}, u)$
where $\mu'=\phi\circ\mu\circ(\phi^{-1}\otimes\phi^{-1}$). Moreover, if $\left\{C^k_{ij}\right\}$ are the structure constants of $\mu$ with respect to the basis $\left\{e_1,\dots,e_n\right\}$ with $e_1=u$ being the unit, then $\mu'$ has the same structure constants with respect to the basis 
$\left\{\phi(e_1),\dots,\phi(e_n)\right\}$ with $u$ the unit element.

Indeed, we use  Proposition \ref{p1} and Definition \ref{d}. The unit is conserved since 
$$
\mu'(x, e_1)=\phi\circ\mu(\phi^{-1}(x), \phi^{-1}(e_1))=\phi\circ\alpha\circ\phi^{-1}(x)\, \text{and}\, 
\mu'(e_1, x)=\phi\circ\mu(\phi^{-1}(e_1), \phi^{-1}(x))=\phi\circ\beta\circ\phi^{-1}(x)
$$
\end{remark}

\begin{proposition}
Let $(A, \mu_A, \alpha_A,\beta_A, u_A)$ and $(B, \mu_B, \alpha_B, \beta_B, u_B)$ be two unital  BiHom-associative algebras with $\phi(u_A)=u_B$. Suppose there exists a BiHom-associative algebra morphism $\phi : A\rightarrow B$. If $(A, \mu'_A, u'_A)$ is an untwist of 
$(A, \mu_A, \alpha_A,\beta_A, u_A)$ then there exists an untwist of $(B, \mu_B, \alpha_B,\beta_B, u_B)$ such that 
$\phi : (A, \mu'_A, u'_A)\rightarrow (B, \mu'_B, u'_B)$ is an algebra morphism. 
\end{proposition}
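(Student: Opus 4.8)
The plan is to push the compatible associative structure $\mu'_A$ forward along $\phi$. Recalling from Definition \ref{Defs1} that the untwist relation on $A$ reads $\mu_A(x,y)=\mu'_A(\alpha_A(x),\beta_A(y))$, I would set
$$\mu'_B:=\phi\circ\mu'_A\circ(\phi^{-1}\otimes\phi^{-1})\qquad\text{and}\qquad u'_B:=\phi(u'_A),$$
in the spirit of Proposition \ref{p1}; here I treat $\phi$ as an isomorphism, which is the case relevant to the classification (the general situation is discussed at the end). Everything then reduces to a sequence of short verifications carried out in a fixed order.

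First, $(B,\mu'_B)$ is associative, since associativity is preserved under conjugation by a linear isomorphism; this is the computation already used in the proof of Proposition \ref{p1}, now with an honest associative product. Second, $u'_B$ is a two-sided unit of $\mu'_B$: using the definition of $\mu'_B$ and the fact that $u'_A$ is the unit of $\mu'_A$, one gets $\mu'_B(x,u'_B)=\phi(\phi^{-1}(x))=x$ and, symmetrically, $\mu'_B(u'_B,x)=x$ for all $x\in B$. Third, $(B,\mu'_B,u'_B)$ is an untwist of $(B,\mu_B,\alpha_B,\beta_B,u_B)$ in the sense of Definition \ref{Defs1}: from $\alpha_B\circ\phi=\phi\circ\alpha_A$ and $\beta_B\circ\phi=\phi\circ\beta_A$ one obtains $\phi^{-1}\circ\alpha_B=\alpha_A\circ\phi^{-1}$ and $\phi^{-1}\circ\beta_B=\beta_A\circ\phi^{-1}$, whence $\mu'_B(\alpha_B(x),\beta_B(y))=\phi\big(\mu'_A(\alpha_A\phi^{-1}(x),\beta_A\phi^{-1}(y))\big)$, which by the untwist relation on $A$ equals $\phi(\mu_A(\phi^{-1}(x),\phi^{-1}(y)))$, and this in turn equals $\mu_B(x,y)$ because $\phi$ is a BiHom-associative morphism. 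Fourth, $\phi:(A,\mu'_A,u'_A)\to(B,\mu'_B,u'_B)$ is an algebra morphism: $\phi\circ\mu'_A=\mu'_B\circ(\phi\otimes\phi)$ holds by the very definition of $\mu'_B$, and $\phi(u'_A)=u'_B$ by the definition of $u'_B$.

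The only genuinely delicate point — and the main obstacle — is the first step: making sense of $\mu'_B$ as a multiplication on all of $B$. If $\phi$ is merely assumed to be a morphism, the identity $\mu'_B(\phi(a),\phi(a'))=\phi(\mu'_A(a,a'))$ pins $\mu'_B$ down only on the subalgebra $\phi(A)\subseteq B$, and even there consistency requires injectivity of $\phi$; extending it to a unital associative untwist on the whole of $B$ then needs extra information about $(B,\mu_B,\alpha_B,\beta_B)$. When $\alpha_B,\beta_B$ are invertible one can sidestep this by taking instead the canonical untwist $\mu'_B(x,y):=\mu_B(\alpha_B^{-1}(x),\beta_B^{-1}(y))$ and re-running steps two through four verbatim, using $\phi\circ\alpha_A^{-1}=\alpha_B^{-1}\circ\phi$ and $\phi\circ\beta_A^{-1}=\beta_B^{-1}\circ\phi$. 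In the classification setting of this paper $\phi$ is an isomorphism, so the issue disappears and the four checks above finish the argument; moreover, if the untwist on $A$ is chosen so that $u'_A=u_A$ (as produced by Proposition \ref{p3}), then $u'_B=\phi(u_A)=u_B$, so the untwist on $B$ inherits the same unit.
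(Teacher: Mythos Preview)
Your approach is essentially the one the paper takes: transport the untwist along $\phi$ and then verify the required identities; the paper's proof likewise only ever evaluates $\mu'_B$ on elements of the form $\phi(x)$, so it is implicitly making the same surjectivity/bijectivity assumption that you flag explicitly. Your write-up is in fact cleaner than the paper's---you actually \emph{define} $\mu'_B$ (as $\phi\circ\mu'_A\circ(\phi^{-1}\otimes\phi^{-1})$) before checking its properties, whereas the paper jumps directly to computations involving $\mu'_B$ and an undefined symbol $\alpha'_A$, and its appeal to Proposition~\ref{p3} is not transparent; so nothing is lost and some clarity is gained.
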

\begin{proof}
Because $\phi$ is a homomorphism from $(A, \mu_A, \alpha_A,\beta_A, u_A)$ to $(B, \mu_B, \alpha_B, \beta_B, u_B)$. Then 
$\alpha_B\phi=\phi\alpha_A,$ and $\beta_B\circ\phi=\phi\circ\beta_A$ for all $x\in A$, we have 
$\mu_B(\phi(x),\phi(u_A))=\mu_B(\phi(x),u_B)=\alpha_B\circ\phi(x)$ and $\mu_B(\phi(u_A),\phi(x))=\mu_B(u_B, \phi(x))=\beta_B\circ\phi(x)$. We have also 
$\phi\circ\mu_A(x,u_A)=\phi\circ\alpha_A(x)$ and $\phi\circ\mu_A(u_A, x)=\phi\circ\beta_A(x)$. By Proposition \ref{p3}, we can see that
$(A_B, \mu_B, u_B)$ is also an associative algebra.
Furthermore

$\begin{array}{ll}
\mu'_B(\phi(x), \phi(u_A))=\mu'_B(\phi(x), u_B)
&=\phi\circ\alpha'_A\circ\phi(x)=\phi\circ\alpha_A\circ\mu_A(x,u_A)\\
&=\alpha_B\circ\phi\circ\mu_A(x,u_A)=\alpha_B\circ\mu_B(\phi(x), u_B)\quad \text{and}
\end{array}$

$\begin{array}{ll}
\mu'_B(\phi(u_A), \phi(x))=\mu'_B(u_B, \phi(x))
&=\phi\circ\beta'_A\circ\phi(x)=\phi\circ\alpha_A\circ\mu_A(u_A, x)\\
&=\beta_B\circ\phi\circ\mu_A(u_A, x)=\beta_B\circ\mu_B(u_B, \phi(x)).
\end{array}$

\end{proof}

\section{Algebraic varieties of BiHom-associative algebras and Classifications} 
In this section, we deal with Algebraic varieties of BiHom-associative algebras with a fixed dimension. A BiHom-associative algebra is identified with its structures constants with respect to a fixed basis. Their set corresponds to an algebraic variety where the ideal is generated by polynomials corresponding to the BiHom-associativity condition.

\subsection{Algebraic varieties $\mathcal{H}_n$ and their action of linear group}. \ \\
Let $A$ be a $n$-dimensional $\K$-linear space and $\left\{e_1, \cdots,e_n\right\}$ be a basis of $A$. A BiHom-algebra structure on $A$ with product $\mu$ and a structure map $\alpha$ and $\beta$ is determined by $n^3$ structure constants $\mathcal{C}_{ij}^k$ where  
$\mu(e_i, e_j)=\sum_{k=1}^n\mathcal{C}_{ij}^ke_k$ and by $2n^2$ structure constants $a_{ji}$ and $b_{kj}$, where 
$\alpha(e_i)=\sum_{j=1}^na_{ji}e_j$ and $\beta(e_j)=\sum_{k=1}^nb_{kj}e_k.$ 
If we require this algebra structure to be BiHom-associative, then this limits the set of structure constants 
($\mathcal{C}_{ij}^k, a_{ij}, b_{jk})$ to a cubic sub-variety of the affine algebraic variety $\mathbb{K}^{n^3+2n^2}$ defined by the following polynomial equations system : 
\begin{equation}\label{s1}
\left\{\begin{array}{c}  
\sum_{l=1}^n\sum_{m=1}^na_{il}\mathcal{C}_{jk}^m\mathcal{C}^s_{lm}-b_{mk}\mathcal{C}_{ij}^l\mathcal{C}_{lm}^s=0,\quad \forall\,i,j,k,s\in\left\{1,\dots,n\right\},\\
\sum_{p=1}^na_{sp}\mathcal{C}_{ij}^p-\sum_{p=1}^n\sum_{q=1}^na_{pi}a_{qj}\mathcal{C}_{pq}^s=0,\quad \forall\, i,j, s\in\left\{1,\dots,n\right\},\\
\sum_{p=1}^nb_{rp}\mathcal{C}_{jk}^p-\sum_{p=1}^n\sum_{q=1}^nb_{pj}b_{qk}\mathcal{C}_{pq}^s=0,\quad\forall\, j, k, r\in\left\{1,\dots,n\right\},\\
\sum_{j=1}^nb_{ji}a_{kj}-\sum_{j=1}^na_{ji}b_{kj}b_{qk}\quad==0,\quad\forall\, j,k\in\left\{1,\dots,n\right\}.
\end{array}\right.
\end{equation}
Moreover if $\mu$ is commutative, 
we have $\displaystyle\mathcal{C}_{ij}^k=\displaystyle\mathcal{C}_{ji}^k\quad i, j, k=1,\cdots,n.$ 
The first set of equation correspond to the BiHom-associative condition 
$\mu(\alpha(e_i),\mu(e_j,e_k))=\mu(\mu(e_i,e_j), \beta(e_k))$ and the second set to multiplicativity condition 
$\alpha\circ\mu(e_i,e_j)=\mu(\alpha(e_i), \alpha(e_j))$ and $\beta\circ\mu(e_j,e_k)=\mu(\beta(e_j), \beta(e_k))$.  
We denote by $\mathcal{H}_n$ the set of all $n$-dimensional multiplicative Hom-associative algebras.  

The group $GL_n(\mathbb{K})$ acts on the algebraic varieties of  BiHom-structures by the so-called transport of structure action defined as follows. Let $A=(A,\mu,\alpha,\beta )$ be a $n$-dimensional  BiHom-associative algebra defined by multiplication $\mu$ and a linear map 
$\alpha$ and $\beta$. Given $f\in GL_n(\mathbb{K})$, the action $f\cdot A$ transports the structure, 
$$\begin{array}{lll}
\Theta : & GL_n(\mathbb{K})\times \mathcal{H}_n&\longrightarrow\mathcal{H}_n \\
                           \ &      (f, (A,\mu, \alpha, \beta))&\longmapsto(A,f^{-1}\circ\mu\circ ( f\otimes f), f\circ\alpha\circ f^{-1}, f\circ\beta\circ f^{-1})                 
\end{array}$$
defined for $x,y \in A$  by
\begin{equation}
f\cdot\mu(x, y)=f^{-1}\mu(f(x), f(y)) , 
\quad 
f\cdot\alpha(x)=f^{-1}\alpha(f(x)),\quad f\cdot\beta(x)=f^{-1}\beta(f(x)). 
\end{equation}
The conjugate class is given by  $\Theta(f, (A,\mu, \alpha, \beta))=(A,f^{-1}\circ\mu\circ ( f\otimes f), f\circ\alpha\circ f^{-1}, f\circ\beta\circ f^{-1}) )$ for $f\in GL_n(\mathbb{K}).$

The orbit of a  Hom-associative algebra $A$ of  $\mathcal{H}as_n$ is given by 
$$
\vartheta(A)=\left\{A'=f\cdot A, \, f\in GL_n(\mathbb{K})\right\}.
$$ The orbits are in \textbf{1-1 correspondence} with the isomorphism classes of $n$-dimensional BiHom-associative algebras.

The stabilizer is 
$$Stab((A,\mu, \alpha, \beta))=\left\{f\in GL_n(\mathbb{K})|(f^{-1}\circ\mu\circ( f\otimes f)=\mu \text{ and  }f\circ \alpha =\alpha \circ f
\text{ and  }f\circ \beta =\beta \circ f\right\}.$$

We characterize --in terms of structure constants-- the fact  that two BiHom-associative algebras are in the same  orbit (or isomorphic).
Let $(A, \mu_A, \alpha_A, \beta_A)$ and $(B, \mu_B, \alpha_B, \beta_B)$ be two $n$-dimensional BiHom-associative algebras. They are isomorphic if there exists
$\varphi\in GL_n(\mathbb{K})$ such that
\begin{equation}\label{c.5} 
\varphi\circ\mu_A=\mu_B(\varphi\otimes\varphi), \quad\alpha_B\circ\varphi=\varphi\circ\alpha_A\quad and\quad\beta_B\circ\varphi=\varphi\circ\beta_A.
\end{equation} 

 \begin{remark}
 Conditions (\ref{c.5}) are equivalent to
$\mu_A=\varphi^{-1}\circ\mu_B\circ\varphi\otimes\varphi$, $\alpha_A=\varphi^{-1}\circ\alpha_B\circ\varphi$ and 
$\beta_A=\varphi^{-1}\circ\beta_B\circ\varphi$. 
\end{remark}
\noindent We  set with respect to a basis $\left\{e_i\right\}_{i=1,\cdots,n}$:

$\varphi(e_i)=\sum_{p=1}^nd_{pi}e_p,\quad \alpha(e_i)=\sum_{j=1}^na_{ji}e_j,\quad
 \beta(e_i)=\sum_{k=1}^nb_{ki}e_k, i\in\left\{1,\dots,n\right\}$ 

$\mu_A(e_i, e_j)=\sum_{k=1}^n\mathcal{C}_{ij}^ke_k,\quad\mu_B(e_i, e_j)=\sum_{k=1}^n\mathcal{\tilde{C}}_{ij}^ke_k,\quad i,j
\in\left\{1,\dots,n\right\}.$
Conditions (\ref{c.5}) translate to the following
system
\begin{equation}
\left\{\begin{array}{c}  
\sum_{k=1}^n\mathcal{C}_{ij}^kd_{pk}-\sum_{k=1}^n\sum_{q=1}^nd_{ki}d_{qj}\mathcal{\tilde{C}}^s_{kq},\quad\forall\, i,j,p\in\left\{1,\dots,n\right\},\\
\sum_{p=1}^n(d_{pi}a_{qp}-a_{pi}d_{pq}),\quad\quad\forall\, i,q \in\left\{1,\dots,n\right\},\\
\sum_{p=1}^n(d_{pi}b_{qp}-b_{pi}d_{qp}),\quad \quad\forall\, i,q \in\left\{1,\dots,n\right\}.
\end{array}\right.
\end{equation}

We shall check whether the previous are isomorphic. In particular, we shall provide all $2$-dimensional BiHom-associative algebras, corresponding to solutions of the system \eqref{s1}. To this end, we use a computer algebra system.
\subsection{Classification in low dimensions}
	\begin{theorem}\label{TheoI} 
 Every $2$-dimensional multiplicative BiHom-associative algebra is isomorphic to one of the following pairwise non-isomorphic BiHom-associative algebras $(A, \ast,\alpha, \beta)$ appearing in Table 1,
 where $\ast$ is the multiplication and $\alpha$ and $\beta$ the structure maps. We set $\left\{e_1, e_2\right\}$ to be a basis of $\mathbb{K}^2$. 
\end{theorem}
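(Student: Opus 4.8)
The plan is to reduce the classification to a finite case analysis governed by the possible simultaneous normal forms of the pair $(\alpha,\beta)$, and then, in each case, to solve the defining polynomial system \eqref{s1} for the structure constants modulo the residual symmetry.

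First I would exploit Proposition \ref{p1} and the remark following it: an isomorphism $\phi$ replaces $(\alpha,\beta)$ by $(\phi\alpha\phi^{-1},\phi\beta\phi^{-1})$ while carrying the structure constants $\mathcal{C}_{ij}^k$ unchanged to the basis $\{\phi(e_1),\phi(e_2)\}$. Since $\alpha\circ\beta=\beta\circ\alpha$, the two $2\times2$ matrices commute, hence over the algebraically closed field $\mathbb{K}$ they can be brought simultaneously into a convenient form: either both diagonal, with eigenvalues $(\lambda_1,\lambda_2)$ and $(\mu_1,\mu_2)$ (allowing repetitions), or both preserving one common flag, one or both being a single Jordan block with eigenvalue $\lambda$ (resp. $\mu$) and off-diagonal $1$. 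This yields finitely many ``shapes'' for $(\alpha,\beta)$, each depending on a handful of scalar parameters.

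Second, for each shape I would substitute the explicit entries $a_{ij},b_{ij}$ into the system \eqref{s1}. With the eigenvalue parameters fixed, the multiplicativity relations $\alpha\circ\mu=\mu\circ(\alpha\otimes\alpha)$, $\beta\circ\mu=\mu\circ(\beta\otimes\beta)$ become linear constraints on the eight unknowns $\mathcal{C}_{ij}^k$, while the BiHom-associativity relations are quadratic in them. Solving this system --- most efficiently with a computer algebra system, as the paper does --- produces in each case a parametrized family of candidate multiplications $\mu$. I would then quotient each family by the residual isomorphisms, namely the $f\in GL_2(\mathbb{K})$ commuting with the chosen normal forms of $\alpha$ and $\beta$ (the stabilizer introduced in Section 2), which act on the remaining free $\mathcal{C}_{ij}^k$ via the transport-of-structure formula; these are used to rescale nonzero parameters to $1$ and eliminate redundant ones. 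Extra care is needed on the degenerate loci where this centralizer jumps in dimension (for instance $\alpha=\beta=0$, or $\alpha,\beta$ scalar), since there the reduction is stronger and collapses several apparent families into one.

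Finally I would check that the list in Table 1 is pairwise non-isomorphic: isomorphic algebras must have conjugate pairs $(\alpha,\beta)$, so entries with different Jordan data are distinguished at once, and within a fixed Jordan datum one verifies that the isomorphism system relating two distinct entries has no solution, again via a Gr\"obner basis computation. I expect the main obstacle to be purely the size and bookkeeping of the case analysis: the simultaneous normal forms of $(\alpha,\beta)$ branch into many subcases, the quadratic system \eqref{s1} decomposes into several components in each, and ensuring both completeness (no algebra missed) and irredundancy (no two table rows secretly isomorphic) is delicate --- which is exactly why the computations are carried out by machine.
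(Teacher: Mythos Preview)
Your proposal is correct and follows essentially the same strategy as the paper: reduce $(\alpha,\beta)$ to simultaneous Jordan form via Proposition~\ref{p1} and the remark after it, solve the polynomial system~\eqref{s1} in each case with a computer algebra system, and separate the resulting orbits using the isomorphism conditions~\eqref{c.5}. The paper does not spell out a proof beyond these indications, so your write-up is in fact a more explicit account of the method it uses.
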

\textbf{Table 1}
\begin{itemize}
\item [$\mathcal{H}^2_1$] : $e_1\ast e_1=e_2,\,e_1\ast e_2=e_2,\,e_2\ast e_1=-e_1,\,e_2\ast e_2=e_2,\,\alpha(e_1)=e_1,\,\alpha(e_2)=e_2,\,\beta(e_1)=-e_1,\,\beta(e_2)=e_2;$
\item [$\mathcal{H}^2_2$] : $e_1\ast e_1=e_2,\,e_1\ast e_2=e_1,\,e_2\ast e_1=-e_1,\, e_2\ast e_2=-e_2,\,\alpha(e_1)=-e_1,\,\alpha(e_2)=e_2,\,
\beta(e_1)=e_1,\,\beta(e_2)=e_2;$ 
\item [$\mathcal{H}^2_3$] : $e_1\ast e_1=e_1,\,e_1\ast e_2=e_2,\,e_2\ast e_1=e_2,\, e_2\ast e_2=e_2,\,\alpha(e_1)=e_1,\,\alpha(e_2)=e_2,\,
\beta(e_1)=e_1,\,\beta(e_2)=e_2;$ 
\item [$\mathcal{H}^2_4$] : $e_1\ast e_1=e_1,\,e_1\ast e_2=e_2,\,e_2\ast e_1=e_2,\, e_2\ast e_2=e_1,\,\alpha(e_1)=e_1,\,\alpha(e_2)=e_2,\,
\beta(e_1)=e_1,\,\beta(e_2)=e_2;$ 
\item [$\mathcal{H}^2_5$] : $e_1\ast e_1=e_1,\,e_1\ast e_2=e_2,\,e_2\ast e_1=e_1,\, e_2\ast e_2=e_2,\,\alpha(e_1)=e_1,\,\alpha(e_2)=e_2,\,
\beta(e_1)=e_1,\,\beta(e_2)=e_2;$ 
\item [$\mathcal{H}^2_6$] : $e_1\ast e_1=e_1,\,e_1\ast e_2=e_1,\,e_2\ast e_1=e_1,\, e_2\ast e_2=e_2,\,\alpha(e_1)=e_1,\,\alpha(e_2)=e_2,\,
\beta(e_1)=e_1,\,\beta(e_2)=e_2;$ 
\item [$\mathcal{H}^2_7$] : $e_1\ast e_2=e_1,\,e_2\ast e_1=-e_1,\, e_2\ast e_2=-e_2,\,\alpha(e_1)=-e_1,\,\alpha(e_2)=e_2,\,
\beta(e_1)=e_1,\,\beta(e_2)=e_2;$ 
\item [$\mathcal{H}^2_{8}$] : $e_1\ast e_1=-e_1,\,e_1\ast e_2=-e_2,\, e_2\ast e_2=e_2,\,\alpha(e_1)=e_1,\,\alpha(e_2)=-e_2,\,
\beta(e_1)=e_1,\,\beta(e_2)=e_2;$ 
\item [$\mathcal{H}^2_{9}$] : $e_1\ast e_1=e_1,\,e_1\ast e_2=-e_2,\,e_2\ast e_1=e_2,\,\alpha(e_1)=e_1,\,\alpha(e_2)=e_2,\,
\beta(e_1)=e_2,\,\beta(e_2)=-e_2;$ 
\item [$\mathcal{H}^2_{10}$] : $e_1\ast e_2=e_1,\,e_2\ast e_1=e_1,\,e_2\ast e_2=e_1+e_2,\,\alpha(e_1)=e_1,\,\alpha(e_2)=e_2,\,\beta(e_1)=e_1,\,\beta(e_2)=e_2;$
\item [$\mathcal{H}^2_{11}$] : $e_1\ast e_1=e_2,\,e_1\ast e_2=e_1,\,e_2\ast e_1=e_1,\,e_2\ast e_2=e_2,\,\alpha(e_1)=e_1,\,\alpha(e_2)=e_2,\,\beta(e_1)=e_1,\,\beta(e_2)=e_2;$
\item [$\mathcal{H}^2_{12}$] : $e_1\ast e_2=e_1,\quad e_2\ast e_1=e_1,\quad e_2\ast e_2=e_1,\quad\beta(e_2)=e_1;$
\item [$\mathcal{H}^2_{13}$] : $e_2\ast e_2=e_1,\quad\alpha(e_1)=e_1,\quad \alpha(e_2)=e_1+e_2,\quad\beta(e_1)=e_1,\quad \beta(e_2)=e_2.$
\end{itemize}
\begin{theorem}\label{TheoII} 
 Every $2$-dimensional unital multiplicative BiHom-associative algebra is isomorphic to one of the following pairwise non-isomorphic BiHom-associative algebras $(A, \ast,\alpha, \beta)$ appearing in Table 2,
 where $\ast$ is the multiplication and $\alpha$ and $\beta$ the structure maps. We set $\left\{e_1, e_2\right\}$ to be a basis of 
	$\mathbb{K}^2$ where $e_1$ is  the unit :  
\end{theorem}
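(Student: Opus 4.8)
The plan is to carry out an exhaustive but organized classification exactly parallel to the proof of Theorem~\ref{TheoI}, now with the extra requirement that the algebra be unital with $e_1$ the fixed unit. First I would fix the basis $\{e_1,e_2\}$ of $\mathbb{K}^2$ and declare $e_1=u$. The unitality axioms of Definition~\ref{d} (equivalently the last Definition before Proposition~\ref{p3}) immediately pin down a large chunk of the structure constants: $\mu(e_1,e_1)=\alpha(e_1)=\beta(e_1)$, $\mu(e_1,e_2)=\alpha(e_2)$, $\mu(e_2,e_1)=\beta(e_2)$, together with $\alpha(e_1)=\beta(e_1)=e_1$. So writing $\alpha(e_2)=a\,e_1+b\,e_2$, $\beta(e_2)=c\,e_1+d\,e_2$, and $\mu(e_2,e_2)=p\,e_1+q\,e_2$, the whole structure is governed by the six parameters $(a,b,c,d,p,q)$ subject to $\alpha\beta=\beta\alpha$, the multiplicativity of $\alpha$ and $\beta$, and the single remaining BiHom-associativity relation $\mu(\alpha(e_2),\mu(e_2,e_2))=\mu(\mu(e_2,e_2),\beta(e_2))$ — the other instances of \eqref{Homassoc} being automatic once one index equals $u$, by the computation in Proposition~\ref{p3}.

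Next I would solve this polynomial system. Multiplicativity $\alpha(\mu(e_2,e_2))=\mu(\alpha(e_2),\alpha(e_2))$ expands, using $\mu(e_1,-)=\beta$ and $\mu(-,e_1)=\alpha$ on the right-hand side, into two scalar equations in $(a,b,c,d,p,q)$; likewise for $\beta$; the commutation $\alpha\beta=\beta\alpha$ gives one more; and the associativity relation gives two more. This is a small enough system that I would split into cases according to whether $b=0$ or $b\neq 0$ (eigenvalue of $\alpha$ on $e_2$ being $0$ or not) and similarly for $d$, then within each branch determine the admissible $(a,c,p,q)$. After listing all raw solutions I would apply Proposition~\ref{p1} in its unital form (the Remark after Proposition~\ref{p3}), i.e. reduce modulo the action of the subgroup of $GL_2(\mathbb{K})$ fixing $e_1$; concretely $\varphi(e_1)=e_1$, $\varphi(e_2)=\lambda e_1+\mu e_2$ with $\mu\neq 0$, which lets me normalize $\alpha(e_2)$ and $\beta(e_2)$ to Jordan-type form and rescale $\mu(e_2,e_2)$. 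Finally I would check pairwise non-isomorphism among the surviving representatives by comparing isomorphism invariants (the Jordan data of $\alpha$ and $\beta$, and the isomorphism type of the untwisted product of Definition~\ref{Defs1} when it exists), assembling the result into Table~2. A computer algebra system, as the authors already indicate they use, would carry out the Gröbner-basis elimination and the consistency checks.

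The main obstacle I expect is twofold. First, the orbit reduction: the stabilizer-fixing-$e_1$ subgroup is only two-dimensional, so some nominally distinct parameter families may coincide and others may resist normalization to a unique canonical form — keeping the case tree complete and non-redundant is the delicate bookkeeping. Second, and more subtle, is the interplay flagged in the Remark after Proposition~\ref{p3}: not every unital BiHom-associative algebra arises by the unitization of Proposition~\ref{p3}, so I cannot simply read Table~2 off Table~1 by adjoining a unit; I must genuinely re-solve the system with $e_1=u$ built in from the start, and in particular watch for solutions where $\alpha$ or $\beta$ has a nontrivial nilpotent part acting on $e_2$, which are the cases most likely to be overlooked. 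Once those are handled, writing out Table~2 and verifying the listed algebras are pairwise non-isomorphic is routine.
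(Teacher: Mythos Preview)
Your proposal is correct and follows essentially the same approach as the paper: the paper does not spell out a proof of Theorem~\ref{TheoII} but simply presents Table~2 as the output of solving the structure-constant system of Section~2.1 (with the unitality constraints of Definition~\ref{d} imposed) via a computer algebra system, then reducing modulo the $GL_2(\mathbb{K})$-action fixing $e_1$. One minor slip to correct: by Definition~\ref{d} one has $\mu(e_1,e_2)=\beta(e_2)$ and $\mu(e_2,e_1)=\alpha(e_2)$, the opposite of what you first write --- though you silently revert to the correct convention $\mu(e_1,-)=\beta$, $\mu(-,e_1)=\alpha$ a few lines later.
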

\textbf{Table 2}
\begin{itemize}
	\item [$\mathcal{H}u^2_1$] : $e_1\ast e_1=e_1,\,e_1\ast e_2=e_2,\,e_2\ast e_1=e_2,\, e_2\ast e_2=e_1+e_2,\,\alpha(e_1)=\beta(e_1)=e_1,\,
	\alpha(e_2)=\beta(e_2)=e_2;$
	\item [$\mathcal{H}u^2_2$] : $e_1\ast e_1=e_1,\quad e_1\ast e_2=e_2,\quad e_2\ast e_1=e_2,\quad\alpha(e_1)=\beta(e_1)=e_1,\,
	\alpha(e_2)=\beta(e_2)=e_2;$
 \item [$\mathcal{H}u^2_3$] : $e_1\ast e_1=e_1,\,e_1\ast e_2=-e_2,\,e_2\ast e_1=-e_2,\,e_2\ast e_2=e_1,\,\alpha(e_1)=\beta(e_1)=e_1,\,
	\alpha(e_2)=\beta(e_2)=-e_2;$
\item [$\mathcal{H}u^2_4$] : $e_1\ast e_1=e_1,\,e_1\ast e_2=e_2,\,e_2\ast e_1=e_2,\,e_2\ast e_2=e_2,\,\alpha(e_1)=\beta(e_1)=e_1,\,
	\alpha(e_2)=\beta(e_2)=e_2.$
\end{itemize}

	\begin{theorem}
 Every $3$-dimensional BiHom-associative algebra is isomorphic to one of the following pairwise non-isomorphic BiHom-associative algebras $(A, \ast,\alpha, \beta)$ where $\ast$ is the multiplication and $\alpha$ and $\beta$ the structure maps. We set $\left\{e_1, e_2,e_3\right\}$ to be a basis of $\mathbb{K}^3$. 
\end{theorem}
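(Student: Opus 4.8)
The plan is to proceed exactly as in the proof of Theorem \ref{TheoI}, reducing the classification to a finite case analysis governed by the possible shapes of the pair of structure maps $(\alpha,\beta)$, and then, within each case, solving the polynomial system \eqref{s1} with the help of a computer algebra system.

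First I would invoke Proposition \ref{p1} together with the two remarks following it: up to isomorphism we may replace $\alpha$ by $\phi\alpha\phi^{-1}$ and $\beta$ by $\phi\beta\phi^{-1}$ for any $\phi\in GL_3(\K)$, while $\mu$ transports to $\mu'=\phi\circ\mu\circ(\phi^{-1}\otimes\phi^{-1})$ with the \emph{same} structure constants in the basis $\{\phi(e_1),\phi(e_2),\phi(e_3)\}$. Since $\alpha\beta=\beta\alpha$, the two commuting endomorphisms can be brought simultaneously to a normal form: fixing a Jordan basis for $\alpha$ and restricting $\beta$ to each generalized eigenspace of $\alpha$, one is left with a short finite list of pairs $(\alpha,\beta)$ — according to whether $\alpha$ is diagonalizable with three, two, or one distinct eigenvalues, has a $2\times 2$ Jordan block plus a $1\times 1$ block, or a single $3\times 3$ block, and similarly for the part of $\beta$ compatible with that decomposition. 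Moreover the eigenvalues are not free: the multiplicativity relations $\alpha\circ\mu(e_i,e_j)=\mu(\alpha(e_i),\alpha(e_j))$ and $\beta\circ\mu(e_j,e_k)=\mu(\beta(e_j),\beta(e_k))$ force, for every nonzero structure constant $\mathcal C_{ij}^k$, a multiplicative constraint among the corresponding eigenvalues, which together with the associativity constraint cuts the admissible $(\alpha,\beta)$ down to a finite set.

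Next, for each admissible pair $(\alpha,\beta)$ I would substitute it into \eqref{s1}, obtaining a polynomial system in the $27$ unknowns $\mathcal C_{ij}^k$; its solution set (computed via Gröbner bases) is a finite union of affine families parameterized by the still-free structure constants. It then remains to reduce each such family modulo the residual symmetry, namely the action on $\mu$ of the stabilizer $\{\,\phi\in GL_3(\K)\ :\ \phi\alpha\phi^{-1}=\alpha,\ \phi\beta\phi^{-1}=\beta\,\}$ of the chosen pair; a careful choice of $\phi$ in this stabilizer normalizes the remaining parameters and selects the representatives to be listed in the table. Finally one checks that the algebras in the list are pairwise non-isomorphic by exhibiting discriminating invariants: the Jordan type and eigenvalues of $\alpha$ and of $\beta$, the dimensions of $\ker\alpha$, $\ker\beta$, $\operatorname{Im}\mu$ and of the commutant of $\mu$, whether the algebra is of associative type in the sense of Definition \ref{Defs1} and, if so, the isomorphism class of its untwist, and the dimension of the automorphism group.

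I expect the main obstacle to be bookkeeping rather than conceptual: the number of $(\alpha,\beta)$ cases is substantial, each spawning its own polynomial system, and the delicate point is to carry out the residual $GL_3(\K)$-reduction completely — neither producing two entries that are secretly isomorphic nor discarding a genuine solution. This is why the non-isomorphism step must be organized around a uniform set of invariants that is provably complete on the list, and why the computer-algebra output has to be cross-checked rather than accepted verbatim.
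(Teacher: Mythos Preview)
Your proposal is correct and follows essentially the same approach as the paper: the paper does not give a detailed proof for this theorem but relies on exactly the methodology you describe --- reducing $(\alpha,\beta)$ to normal form via Proposition~\ref{p1} and the subsequent remarks, then solving the polynomial system~\eqref{s1} with a computer algebra system. Your outline is in fact more explicit than the paper's treatment, which simply presents the resulting list.
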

\begin{itemize} 
	\item [$\mathcal{H}^3_1$] : $e_1\ast e_1=e_1,\,e_1\ast e_2=e_1,\, e_2\ast e_1=e_2,\,e_2\ast e_2=e_2,\,e_3\ast e_2=e_3,\, e_3\ast e_3=e_3,\,
	\alpha(e_1)=e_1,\, \alpha(e_2)=e_2,\\ \beta(e_1)=e_1,\quad\beta(e_2)=e_1-e_2;$ 
	\item [$\mathcal{H}^3_2$] : $e_1\ast e_1=e_1,\,e_1\ast e_2=e_1,\, e_1\ast e_3=e_3,\,e_2\ast e_1=e_2,\, e_2\ast e_2=e_2,\, e_2\ast e_3=e_3,\,
	e_3\ast e_1=e_3,\, e_3\ast e_2=e_3,\\ \alpha(e_1)=e_1,\,\alpha(e_2)=e_2,\, \alpha(e_3)=e_3,\, \beta(e_1)=e_1,\,\beta(e_2)=e_1;$ 
	\item [$\mathcal{H}^3_3$] : $e_1\ast e_1=e_1,\,\, e_1\ast e_2=e_1,\,\,e_2\ast e_2=e_2,\, e_1\ast e_3=-e_3,\quad e_3\ast e_2=e_1-e_2,\, 
	\alpha(e_1)=e_1,\,\alpha(e_2)=e_2,\\ \alpha(e_3)=-e_3,\, \beta(e_1)=e_1,\,\beta(e_2)=e_1;$
		\item [$\mathcal{H}^3_4$] : $e_1\ast e_1=e_1,\quad e_1\ast e_2=e_1-e_2,\quad e_2\ast e_1=e_2,\quad e_2\ast e_2=-e_1,\,\alpha(e_1)=e_1,\, 
		\alpha(e_2)=e_2,\, \beta(e_1)=e_1;$
	\item [$\mathcal{H}^3_5$] : $e_1\ast e_1=e_1,\quad e_1\ast e_2=e_1,\, e_2\ast e_1=e_2,\quad e_2\ast e_2=e_2,\, e_3\ast e_1=e_3,\quad 
	e_3\ast e_2=e_3,\,e_3\ast e_3=e_1-e_3,\\ \alpha(e_1)=e_1,\quad \alpha(e_2)=e_2,\quad \alpha(e_3)=e_3,\quad\beta(e_1)=e_1,\quad\beta(e_2)=e_1;$
	\item [$\mathcal{H}^3_{6}$] : $e_1\ast e_1=e_1,\,e_1\ast e_2=e_1,\,e_1\ast e_3=e_3,\,e_2\ast e_1=e_2,\,e_2\ast e_2=e_2,\,e_2\ast e_3=e_3,\, e_3\ast e_1=e_3,\,e_3\ast e_2=e_3,\\ \alpha(e_1)=e_1,\,\alpha(e_2)=e_2,\,\alpha(e_3)=e_3,\,\beta(e_1)=e_1,\,\beta(e_2)=e_1;$
\item [$\mathcal{H}^3_{7}$] : $e_1\ast e_1=e_1,\quad e_1\ast e_2=e_2,\quad e_3\ast e_2=e_3,\quad e_3\ast e_3=e_3,\quad \alpha(e_1)=e_1,
\quad\alpha(e_2)=e_2,\quad\beta(e_1)=e_1;$
\item [$\mathcal{H}^3_{8}$] : $e_1\ast e_3=e_1-e_2,\,e_2\ast e_3=e_1-e_2,\,e_3\ast e_3=e_1-e_2,\,\alpha(e_1)=e_1,\,\alpha(e_2)=e_1-e_2,\,\alpha(e_3)=e_2-e_3,\\ \beta(e_1)=e_1,\,\beta(e_2)=e_1-e_2,\,\beta(e_3)=e_2-e_3;$
\item [$\mathcal{H}^3_{9}$] : $e_2\ast e_3=-e_1,\,e_3\ast e_2=e_1,\,e_3\ast e_3=e_1,\,\alpha(e_1)=e_1,\,\alpha(e_2)=e_1+e_2,\,\alpha(e_3)=e_2+e_3,\,\, \beta(e_1)=e_1,\\ \beta(e_2)=e_1+e_2,\,\beta(e_3)=e_2+e_3;$
\item [$\mathcal{H}^3_{10}$] : $e_1\ast e_3=e_1+e_2,\,e_2\ast e_3=e_1+e_2,\,e_3\ast e_1=e_1-e_2,\,e_3\ast e_2=e_1-e_2,\,e_3\ast e_3=e_1-e_2,\,\alpha(e_1)=e_1,\\ \alpha(e_2)=e_1-e_2,\,\alpha(e_3)=e_2-e_3,\,\, \beta(e_1)=e_1,\, \beta(e_2)=e_1-e_2,\,\beta(e_3)=e_2-e_3;$
\item [$\mathcal{H}^3_{11}$] : $e_2\ast e_3=-e_1,\,e_3\ast e_2=e_1,\,e_3\ast e_3=e_1,\,\alpha(e_1)=e_1,\,\alpha(e_2)=e_1+e_2,\,\alpha(e_3)=e_1+e_2+e_3,\,\, \beta(e_1)=e_1,\\ \beta(e_2)=e_1+e_2,\,\beta(e_3)=e_1+e_2+e_3;$
\item [$\mathcal{H}^3_{12}$] : $e_1\ast e_3=e_1-e_2,\,e_2\ast e_3=e_1-e_2,\,e_3\ast e_3=e_1-e_2,\,\alpha(e_1)=e_1,\,\alpha(e_2)=e_1,\,
\alpha(e_3)=e_2,\,\, \beta(e_1)=e_1,\\ \beta(e_2)=e_1,\,\beta(e_3)=e_2;$
\item [$\mathcal{H}^3_{13}$] : $e_1\ast e_3=e_1-e_2,\,e_2\ast e_3=e_1-e_2,\,e_3\ast e_1=e_1-e_2,\,e_3\ast e_2=e_1-e_2,\,e_3\ast e_3=e_1-e_2,\,\alpha(e_1)=e_1,\,\alpha(e_2)=e_1,\\ \alpha(e_3)=e_2,\,\, \beta(e_1)=e_1,\,\beta(e_2)=e_1,\,\beta(e_3)=e_2.$
\end{itemize}

\begin{theorem}
 Every $3$-dimensional unital BiHom-associative algebra is isomorphic to one of the following pairwise non-isomorphic BiHom-associative algebras $(A, \ast,\alpha, \beta)$ where $\ast$ is the multiplication and $\alpha$ and $\beta$ the structure maps. We set $\left\{e_1, e_2,e_3\right\}$ to be a basis of $\mathbb{K}^3$. 
\end{theorem}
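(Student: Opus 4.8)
The plan is to mimic, in the $3$-dimensional unital setting, the strategy that underlies Theorems~\ref{TheoI} and~\ref{TheoII} and the classification of general $3$-dimensional BiHom-associative algebras: reduce to a finite computation by exploiting the $GL_3(\K)$-action of Proposition~\ref{p1}, then carve out the orbits. First I would fix the basis $\{e_1,e_2,e_3\}$ with $e_1=u$ the prescribed unit, and translate the defining data into structure constants $\mathcal C_{ij}^k$ together with the entries $a_{ij}$, $b_{ij}$ of $\alpha,\beta$. The unitality axioms of Definition~\ref{d} impose $\alpha(u)=\beta(u)=u$, and $\mu(x,u)=\alpha(x)$, $\mu(u,x)=\beta(x)$ for all $x$; concretely this pins down $\mathcal C_{1j}^k=b_{kj}$, $\mathcal C_{i1}^k=a_{ki}$, and the first row/column of the matrices of $\alpha,\beta$. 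So the genuinely free data are the structure constants $\mathcal C_{ij}^k$ with $i,j\in\{2,3\}$ together with the $2\times2$ blocks of $\alpha$ and $\beta$ acting on $\spanvec(e_2,e_3)$.

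Next I would impose the BiHom-associativity relation \eqref{Homassoc} and the two multiplicativity relations on all basis triples; together with the unit-compatibility already substituted, this yields the polynomial system analogous to \eqref{s1} restricted to the unital locus, which I would solve with a computer algebra system exactly as announced before Theorem~\ref{TheoI}. This produces a (hopefully short) list of solutions. Then, using the remark after Proposition~\ref{p3} — namely that a unit-preserving $\phi\in GL_3(\K)$ conjugates $\alpha,\beta$ and transports $\mu$ while keeping $u$ the unit — I would reduce the list modulo the action of the subgroup $\{\phi\in GL_3(\K):\phi(e_1)=e_1\}$. Since conjugation by such $\phi$ sends the $2\times2$ blocks of $\alpha,\beta$ to arbitrary simultaneously-conjugate pairs (they commute, by $\alpha\beta=\beta\alpha$), I would first normalize $(\alpha,\beta)$ to a canonical commuting pair of Jordan-type forms on $\spanvec(e_2,e_3)$, and only then run through the finitely many compatible multiplications, discarding redundant ones by exhibiting explicit isomorphisms and separating the survivors by invariants (traces and eigenvalues of $\alpha,\beta$, dimension of the image of $\mu$, the associated untwist in the sense of Definition~\ref{Defs1}, commutativity, presence of idempotents, etc.).

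The main obstacle I expect is the orbit-separation bookkeeping rather than solving the polynomial system: after normalizing $(\alpha,\beta)$, the residual stabilizer — the subgroup of unit-preserving $\phi$ that also commutes with the fixed $\alpha,\beta$ — can still be positive-dimensional (e.g. when $\alpha=\beta=\id$ on the $2$-dimensional complement, it is all of $GL_2$), and I must use that residual freedom to put each multiplication into a canonical shape and to prove that no two table entries are isomorphic. Concretely, the hard cases will be those where $\alpha$ or $\beta$ restricts to a scalar on $\spanvec(e_2,e_3)$, because then the classification of the $\mu$-part essentially contains the classification of $3$-dimensional unital associative (or Hom-associative) algebras up to a $GL_2$ action, and one must be careful, over the algebraically closed field $\K$ of characteristic $0$, to enumerate all idempotent-and-nilpotent patterns without duplication. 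I would organize the case split by the Jordan type of $(\alpha|_{e_2,e_3},\beta|_{e_2,e_3})$, handle each branch independently, and finish by checking pairwise non-isomorphism across branches via the numerical invariants listed above; assembling these branch lists gives the stated table.
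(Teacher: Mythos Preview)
Your proposal is correct and follows essentially the same approach as the paper: translate the unital BiHom-associativity axioms into the polynomial system in the structure constants (the unital specialization of~\eqref{s1}), solve it with a computer algebra system, and then reduce modulo the $GL_3(\K)$-action of Proposition~\ref{p1} using Jordan normalization of the structure maps as in the remark following that proposition. The paper does not spell out the orbit-separation bookkeeping or the residual-stabilizer analysis you describe, so your write-up is in fact more detailed than what appears there, but the underlying method is the same.
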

\begin{itemize}
	\item [$\mathcal{H}u^3_1$] : $e_1\ast e_1=e_1,\,e_1\ast e_2=e_1-e_2,\quad e_2\ast e_1=e_2,\,e_2\ast e_2=-e_1,\,e_3\ast e_3=-e_3\quad
	\alpha(e_1)=e_1,\,\alpha(e_2)=e_2,\,\beta(e_1)=e_1,\,\beta(e_2)=e_1-e_2;$ 
	\item [$\mathcal{H}u^3_2$] : $e_1\ast e_1=e_1,\,\, e_1\ast e_2=e_1,\,\, e_2\ast e_1=e_2,\,\, e_2\ast e_2=e_2,\,\,e_2\ast e_3=e_1-e_2,\,\,
	e_3\ast e_1=e_3,\,\,e_3\ast e_2=e_3,\\ e_3\ast e_3=e_1-e_2,\,\alpha(e_1)=e_1,\, \alpha(e_2)=e_2,\,\alpha(e_3)=e_3,\,\beta(e_1)=e_1,\,
	\beta(e_2)=e_1;$ 
	\item [$\mathcal{H}u^3_3$] : $e_1\ast e_1=e_1,\,e_1\ast e_2=e_1,\, e_2\ast e_1=e_2,\,e_2\ast e_2=e_2,\, e_3\ast e_1=e_3,\,e_3\ast e_2=e_3,\,
	\alpha(e_1)=e_1,\,\alpha(e_2)=e_2,\\ \alpha(e_3)=e_3,\, \beta(e_1)=e_1,\,\beta(e_2)=e_1;$  
	\item [$\mathcal{H}u^3_4$] : $e_1\ast e_1=e_1,\,e_1\ast e_2=e_1,\, e_2\ast e_1=e_2,\,e_2\ast e_2=e_2,\, e_3\ast e_3=e_3,\,
	\alpha(e_1)=e_1,\,\alpha(e_2)=e_2,\,\beta(e_1)=e_1,\\ \beta(e_2)=e_1;$
	\item [$\mathcal{H}u^3_{5}$] : $e_1\ast e_1=e_1,\,e_1\ast e_2=e_1-e_2,\, e_2\ast e_1=e_2,\,e_3\ast e_1=e_3,\,\alpha(e_1)=e_1,\,
	\alpha(e_2)=e_2,\,\alpha(e_3)=e_3,\\ \beta(e_1)=e_1,\,\beta(e_2)=e_1-e_2;$
	\item [$\mathcal{H}u^3_{6}$] : $e_1\ast e_1=e_1,\,e_1\ast e_2=e_1,\quad e_2\ast e_1=e_2,\,e_2\ast e_2=e_2,\quad e_2\ast e_3=e_1-e_2,\,
	e_3\ast e_1=e_3,\quad e_3\ast e_2=e_3,\\ \alpha(e_1)=e_1,\,\alpha(e_2)=e_2,\,\alpha(e_3)=e_3,\,\beta(e_1)=e_1,\,\beta(e_2)=e_1;$
	\item [$\mathcal{H}u^3_{7}$] : $e_1\ast e_1=e_1,\,e_1\ast e_2=e_1,\quad e_2\ast e_1=e_2,\,e_2\ast e_2=e_2+e_3,\quad e_3\ast e_1=e_3,\quad 
	e_3\ast e_2=e_3,\, \alpha(e_1)=e_1,\\ \alpha(e_2)=e_2,\,\alpha(e_3)=e_3,\,\beta(e_1)=e_1,\,\beta(e_2)=e_1;$
	\item [$\mathcal{H}u^3_{8}$] : $e_1\ast e_1=e_1,\,e_1\ast e_2=e_1-e_2,\,e_2\ast e_1=e_2,\,e_3\ast e_1=e_3,\,e_3\ast e_2=e_3,\,\alpha(e_1)=e_1,\,\alpha(e_2)=e_2,\,\alpha(e_3)=e_3,\\ \beta(e_1)=e_1,\,\beta(e_2)=e_1-e_2;$
	\item [$\mathcal{H}^3_{9}$] : $e_1\ast e_1=e_1,\,e_1\ast e_2=e_1,\,e_2\ast e_1=e_2,\,e_2\ast e_2=e_2,\,e_3\ast e_1=e_3,\, e_3\ast e_2=e_3,\,
	\alpha(e_1)=e_1,\,\alpha(e_2)=e_2,\\ \alpha(e_3)=e_3,\, \beta(e_1)=e_1,\,\beta(e_2)=e_1;$
	\item [$\mathcal{H}u^3_{10}$] : $e_1\ast e_1=e_1,\,e_1\ast e_2=e_1,\,e_1\ast e_3=e_3,\,e_2\ast e_1=e_2,\,e_2\ast e_2=e_2,\, e_2\ast e_3=e_3,\,
	\alpha(e_1)=e_1,\,\alpha(e_2)=e_2,\\ \beta(e_1)=e_1,\,\beta(e_2)=e_1,\, \beta(e_3)=e_3;$
	\item [$\mathcal{H}u^3_{11}$] : $e_1\ast e_1=e_1,\quad e_1\ast e_3=e_3,\quad e_2\ast e_2=e_2,\quad e_3\ast e_2=e_2,\quad \quad\alpha(e_1)=e_1,\quad\beta(e_1)=e_1,\quad \beta(e_3)=e_3;$
	\item [$\mathcal{H}u^3_{12}$] : $e_1\ast e_1=e_1,\quad e_1\ast e_2=e_2,\quad e_2\ast e_3=e_3,\quad e_3\ast e_3=e_3,\quad\alpha(e_1)=e_1,\quad \beta(e_1)=e_1,\quad \beta(e_2)=e_2;$
		\item [$\mathcal{H}u^3_{13}$] : $e_1\ast e_1=e_1,\quad e_2\ast e_2=e_2,\quad e_2\ast e_3=e_2,\quad e_3\ast e_1=e_3,\quad\alpha(e_1)=e_1,\quad \alpha(e_3)=e_3,\quad \beta(e_1)=e_1.$
\end{itemize}

\section{BiHom-coassociative coalgebras and BiHom-bialgebras}
In this Section, we show that for a fixed dimension $n$, the set of BiHom-bialgebras is endowed with a structure of an algebraic variety and a natural structure transport action which describes the set of isomorphic BiHom-algebras. Solving such systems of polynomial equations leads
to the classification of such structures. We shall now introduce the dual concept to  BiHom-associative algebras:
\begin{definition}
A BiHom-coassociative coalgebra is $4$-tuple $(A, \Delta, \psi, \omega)$ in which $A$ is a linear space, $\psi, \omega : A\longrightarrow A$
and $\Delta : A\longrightarrow A\otimes A$ are linear maps, such that 
\begin{equation}
\psi\circ\omega=\omega\circ\psi, \quad (\psi\otimes\psi)\circ\Delta=\Delta\circ\psi, \quad (\omega\otimes\omega)\circ\Delta=\Delta\circ\omega,\quad 
(\Delta\otimes\psi)\circ\Delta=(\omega\otimes\Delta)\circ\Delta.
\end{equation}
We call $\psi$ and $\omega$ (in this order) the structure maps of $A$.

Let $V$ be an $n$-dimensional vector space over $\mathbb{K}$. Fixing a basis $\left\{e_i\right\}_{i=\left\{1,\dots,n\right\}}$ of $V$,
a multiplication $\mu$ (resp. linear maps $\alpha$, $\beta$, $\psi$, $\omega$ and a comultiplication $\Delta$) is identified with its $n^3$,
$2n^2$ structure constants $\mathcal{C}^k_{ij}\in\mathbb{K}$ (resp. $a_{ji}$, $b_{ji}$, $\xi_{ji}$, $D_i^{jk}$ and $\gamma_{ji})$
where $\mu(e_i\otimes e_j)=\sum_{k=1}^n\mathcal{C}_{ij}^ke_k$, $\alpha(e_i)=\sum_{j=1}^na_{ji}e_j$, $\beta(e_i)=\sum_{j=1}^nb_{ji}e_j,$
$\psi(e_i)=\sum_{j=1}^n\xi_{ji}e_j$, $\omega(e_i)=\sum^{n}_{j=1}\gamma_{ji}e_j$ and $\Delta(e_i)=\sum_{j,k=1}^nD_{i}^{jk}e_j\otimes e_k.$
The counit $\varepsilon$ is identified with its $n$ structure constants $\zeta_i.$ We assume that $e_1$ is the unit. 

A family $\left\{(\mathcal{C}_{ij}^k, a_{ji}, b_{ji},\xi_{ji},\gamma_{ji}, D_i^{jk}),\dots,i, j, k\in \left\{1,\dots,n\right\}\right\}$
represents a BiHom-coassociative coalgebra if the underlying family satisfies the appropriate conditions which translate to the following
polynomial equations:

\begin{equation}\label{s1}
\left\{\begin{array}{c}  
\sum_{k=1}^n\gamma_{ki}\xi_{pk}-\sum_{j=1}^n\xi_{ji}\gamma_{pj}=0,\quad \forall\,i,p\in\left\{1,\dots,n\right\},\\
\sum_{j=1}^n\sum_{k=1}^nD_i^{jk}\xi_{pj}\xi_{qk}-\sum_{j=1}^n\xi_{ji}D_j^{qp}=0,\quad \forall\, i,p,q\in\left\{1,\dots,n\right\},\\
\sum_{j=1}^n\sum_{k=1}^nD^{jk}_i\gamma_{pj}\gamma_{qk}-\sum_{k=1}^n\gamma_{ki}D^{pq}_k=0,\quad\forall\, i, p,q\in\left\{1,\dots,n\right\},\\
\sum_{j=1}^n\sum_{k=1}^n(D_i^{jk}\xi_{rk}-D_i^{jk}\gamma_{rj}D_k^{pq})\quad=0,\quad\forall\, i,p,q,r\in\left\{1,\dots,n\right\}.
\end{array}\right.
\end{equation}
A morphism $f : (A, \Delta_A, \psi_A, \omega_A)\longrightarrow (B, \Delta_B, \psi_B, \omega_B)$  of BiHom-coassociative coalgebras is a linear map 
$f : A\longrightarrow B$ such that $\psi_B\circ f=f\circ\psi_A,\quad\omega_B\circ f=f\circ\omega_A$ and $(f\otimes f)\circ \Delta_A=\Delta_B\circ f.$
\begin{equation}
\left\{\begin{array}{c}  
\sum_{j=1}^n(d_{ji}\xi_{kj}-\xi_{ji}d_{kj})=0,\quad \forall\,i,k\in\left\{1,\dots,n\right\},\\
\sum_{j=1}^n(d_{ji}\gamma_{kj}-\gamma_{ji}d_{kj})=0,\quad \forall\,i,k\in\left\{1,\dots,n\right\},\\
\sum_{j=1}^n\sum_{k=1}^nD_i^{jk}d_{pj}d_{qk}-\sum_{j=1}^nd_{ji}D_j^{qp}=0,\quad \forall\, i,p,q\in\left\{1,\dots,n\right\}.
\end{array}\right.
\end{equation}
A  BiHom-coassociative coalgebra $(A, \Delta, \psi, \omega)$ is called counital if there exists a linear map $\varepsilon : A\rightarrow \mathbb{K}$
(called a counit) such that 
\begin{equation}
\varepsilon\circ\psi=\varepsilon, \quad  \varepsilon\circ\omega=\varepsilon, \quad (id_A\otimes\varepsilon)\circ\Delta=\omega\quad\text{and}
\quad (\varepsilon\otimes id_A)\circ\Delta=\psi.
\end{equation}
We have $\sum_{j=1}^{n}\xi_{ji}\zeta_j=\zeta_i,\quad \sum_{j=1}^{n}\gamma_{ji}\zeta_j=\zeta_i,\quad
\sum_{p=1}^{n}\sum_{q=1}^{n}D_i^{pq}\varepsilon_q=a_{ij},\quad\sum_{p=1}^{n}\sum_{q=1}^{n}D_i^{qr}\varepsilon_r=b_{ij}.$

A morphism of counital BiHom-coassociative coalgebras $f : A\longrightarrow B$ is called counital if $f=\varepsilon_A$, where $\varepsilon_A$ and 
$\varepsilon_B$ are the counits of $A$ and $B$, respectively.
\end{definition}

\begin{definition}
A  BiHom-bialgebra is a $7$-tuple $(H,\mu,\Delta,\alpha,\beta,\psi,\omega)$, with the property that $(H,\mu, \alpha, \beta)$ is a BiHom-associative 
algebra, $(H,\Delta, \psi, \omega)$ is a BiHom-coassociative coalgebra and moreover the following relations are satisfied, for $h, h'\in H$ : 

\begin{equation}
\begin{gathered}
\Delta(hh')=h_1h'_1\otimes h_2h'_2, \quad \alpha\circ\psi=\psi\circ\alpha, \quad \alpha\circ\omega=\omega\circ\alpha,\\
\beta\circ\psi=\psi\circ\beta,\quad \beta\circ\omega=\omega\circ\beta,\quad (\alpha\otimes\alpha)\circ\Delta=\Delta\circ\alpha,\\
(\beta\otimes\beta)\circ\Delta=\Delta\circ\beta,\quad \psi(hh')=\psi(h)\psi(h'), \quad \omega(hh')=\omega(h)\omega(h').
\end{gathered}
\end{equation}

\begin{equation}\label{s1}
\left\{\begin{array}{c}  
\sum_{k=1}^n\mathcal{C}_{i,j}^kD^{pq}_k-\sum_{r=1}^n\sum_{s=1}^n\sum_{u=1}^n\sum_{v=1}^nD_i^{rs}D_j^{uv}\mathcal{C}^p_{ru}\mathcal{C}^q_{sv}=0,\quad \forall\,i,j,p,q\in\left\{1,\dots,n\right\},\\
\sum_{j=1}^n(\xi_{ji}a_{kj}-a_{ji}\xi_{kj})=0,\quad \forall\,i,k\in\left\{1,\dots,n\right\},\\
\sum_{j=1}^n(\gamma_{ji}a_{kj}-a_{ji}\gamma_{kj})=0,\quad \forall\,i,k\in\left\{1,\dots,n\right\},\\
\sum_{j=1}^n(\xi_{ji}b_{kj}-b_{ji}\xi_{kj})=0,\quad \forall\,i,k\in\left\{1,\dots,n\right\},\\
\sum_{j=1}^n(\gamma_{ji}b_{kj}-b_{ji}\gamma_{kj})=0,\quad \forall\,i,k\in\left\{1,\dots,n\right\},\\
\sum_{p=1}^n\sum_{q=1}^nD_i^{pq}a_{rp}a_{sq}-\sum_{j=1}^na_{ji}D_j^{rs}=0,\quad \forall\, i,r,s\in\left\{1,\dots,n\right\},\\
\sum_{p=1}^n\sum_{q=1}^nD^{pq}_ib_{rp}b_{sq}-\sum_{j=1}^nb_{ji}D^{rs}_j=0,\quad\forall\, i, r,s\in\left\{1,\dots,n\right\},\\
\sum_{k=1}^n\mathcal{C}^k_{ij}\xi_{pk}-\sum_{k=1}^n\sum_{q=1}^n\xi_{ki}\xi_{qj}\mathcal{C}^p_{kq}\quad=0,\quad\forall\,i,j,p\in\left\{1,\dots,n\right\},\\
\sum_{k=1}^n\mathcal{C}^k_{ij}\gamma_{pk}-\sum_{k=1}^n\sum_{q=1}^n\gamma_{ki}\gamma_{qj}\mathcal{C}^p_{kq}\quad=0,\quad\forall\,i,j,p \in\left\{1,\dots,n\right\}.
\end{array}\right.
\end{equation}

We say that $H$ is a unital and counital BiHom-bialgebra if, in addition, it admits a unit $u_H$ and a counit $\varepsilon_H$  such that
\begin{equation}
\begin{gathered}
\Delta(u_H)=u_H\otimes u_H, \quad \varepsilon_H(u_H)=1, \quad \psi(u_H)=u_H,\quad \omega(u_H)=u_H,\\
\varepsilon_H\circ\alpha=\varepsilon_H,\quad \varepsilon_H\circ\beta=\varepsilon_H,\quad \varepsilon_H(hh')=\varepsilon_H(h)\varepsilon_H(h'),
\quad \forall h,h'\in H.
\end{gathered}
\end{equation}
We have $\Delta(e_1)=e_1\otimes e_1,\quad\varepsilon(e_1)=1,\quad\psi(e_1)=e_1,\quad\omega(e_1)=e_1,
\quad \sum_{j=1}^{n}a_{ji}\zeta_j=\zeta_i,\quad\text{and}\quad\sum_{j=1}^{n}b_{ji}\eta_j=\eta_i.$

Let us record the formula expressing the BiHom-coassociativity of $\Delta$ : 
\begin{equation}
\Delta(h_1)\otimes\psi(h_2)=\omega(h_1)\otimes\Delta(h_2), \quad h\in H.
\end{equation}
\end{definition}

\subsection{Classification in Dimension 2 in $\mathcal{H}_i^2$}\ \\
Thanks to computer Hom-algebra, we obtain the following Hom-coalgebras associated to the previons Hom algebras in order to obtain a 
Hom-bialgebra structures. We denote the comultiplications by $\Delta^2_{i,j}$, where $i$ indicates the item of the multiplication and $j$
the item of comultiplication.
\begin{theorem}
The set of $2$-dimensional BiHom-Bialgebras algebras yields two non-isomorphic algebras. Let $\left\{e_1,e_2\right\}$ be a basis of 
$\mathbb{K}^2$, then the BiHom-Bialgebras are given by the following non-trivial comultiplications. 
\end{theorem}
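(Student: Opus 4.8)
The plan is to mirror, at the level of coalgebras, the computational strategy already used for the associative part: translate the defining axioms of a unital-counital BiHom-bialgebra structure on $\mathbb{K}^2$ into the polynomial system in the structure constants $(\mathcal{C}_{ij}^k,a_{ji},b_{ji},\xi_{ji},\gamma_{ji},D_i^{jk},\zeta_i)$ displayed above, then solve it with a computer algebra system. First I would fix, for each of the thirteen $2$-dimensional BiHom-associative algebras $\mathcal{H}^2_i$ of Theorem~\ref{TheoI}, the already-known values of $\mathcal{C}_{ij}^k$, $a_{ji}$, $b_{ji}$, and impose $e_1$ as the unit (so that $\Delta(e_1)=e_1\otimes e_1$, $\varepsilon(e_1)=1$, $\psi(e_1)=\omega(e_1)=e_1$). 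This leaves the unknowns $\xi_{ji},\gamma_{ji}$ (the coalgebra structure maps), $D_i^{jk}$ (the comultiplication), and $\zeta_i$ (the counit) constrained by: the BiHom-coassociativity system~\eqref{s1}, the compatibility equations relating $\Delta$ to $\mu$, the counit axioms, and the commutation relations $\alpha\psi=\psi\alpha$, etc.

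Next I would run the elimination/Gr\"obner-basis computation for each $i$ separately and discard the cases where the only solution is the trivial comultiplication (which does not give a genuinely new bialgebra). The claim of the theorem is that only two $\mathcal{H}^2_i$ survive with a nontrivial $\Delta$, so the bulk of the argument is bookkeeping: recording, for the two surviving multiplications, the explicit families of comultiplications $\Delta^2_{i,j}$ solving the system. Then I would apply the isomorphism criterion of~\eqref{c.5} together with its coalgebra analogue (the morphism system for coalgebras given above, i.e.\ $(f\otimes f)\circ\Delta_A=\Delta_B\circ f$, $\psi_B f=f\psi_A$, $\omega_B f=f\omega_A$, plus $f$ unital and counital) to check that the two resulting BiHom-bialgebras are pairwise non-isomorphic: concretely, I would show that no $\varphi\in GL_2(\mathbb{K})$ fixing $e_1$ simultaneously intertwines the multiplications, the structure maps, and the comultiplications. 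Finally I would verify completeness: since every $2$-dimensional BiHom-bialgebra has an underlying BiHom-associative algebra isomorphic to some $\mathcal{H}^2_i$ by Theorem~\ref{TheoI}, and Proposition~\ref{p1} lets us transport the coalgebra data along that isomorphism, it suffices to have solved the system on the normal forms $\mathcal{H}^2_i$, which we did.

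The main obstacle I anticipate is the same one that always plagues such "classification by Gr\"obner basis" results: the polynomial system coupling $\Delta$ to $\mu$ (the first equation of the BiHom-bialgebra system, quadratic in the $D_i^{jk}$ and quartic overall) has many branches, and organizing the case split so that one can be confident the list of surviving multiplications is exactly $\{$two items$\}$ and no solution family has been missed is delicate. A secondary subtlety is that the counit axioms $(\mathrm{id}\otimes\varepsilon)\Delta=\omega$ and $(\varepsilon\otimes\mathrm{id})\Delta=\psi$ force $\psi,\omega$ to be partly determined by $\Delta$ and $\varepsilon$, so one must be careful not to over- or under-constrain the system when substituting the unit normalization $e_1=u$. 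Once the solution sets are in hand, the non-isomorphism check is a short finite computation and presents no real difficulty.
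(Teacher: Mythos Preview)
Your overall computational strategy is the same one the paper uses: fix each underlying BiHom-associative algebra $\mathcal{H}^2_i$ from Theorem~\ref{TheoI}, impose the BiHom-bialgebra polynomial system on the unknown coalgebra data, solve by computer algebra, and then sort the solutions up to isomorphism. So far so good.

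However, you have misidentified which theorem you are proving. The statement in question is the \emph{non-unital, non-counital} classification (the one built on the $\mathcal{H}^2_i$ of Theorem~\ref{TheoI}), not the unital one (which is the next theorem, built on the $\mathcal{H}u^2_i$ of Theorem~\ref{TheoII}). Your proposal imposes $e_1$ as a unit and forces $\Delta(e_1)=e_1\otimes e_1$, $\varepsilon(e_1)=1$, $\psi(e_1)=\omega(e_1)=e_1$; but look at the very first comultiplication in the list you are asked to recover: $\Delta(e_1)=e_1\otimes e_2+e_2\otimes e_2$, which is not group-like, and no counit $\varepsilon$ appears anywhere in the ten items. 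If you run your system with the unital/counital normalizations baked in, you will simply never find these solutions---you will instead reproduce (at best) the list in the subsequent theorem with the $\Delta^2_{i,j}$ and explicit $\varepsilon$'s.

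A second, related misreading: you take ``yields two non-isomorphic algebras'' to mean that exactly two of the thirteen $\mathcal{H}^2_i$ admit a nontrivial compatible coalgebra. The paper's list has ten entries and the accompanying remark only excludes $\mathcal{H}^2_1$, so that interpretation cannot be right; do not organize your case split around it. The correct plan is: for each $\mathcal{H}^2_i$, drop the unit/counit constraints entirely, solve the coupled system for $(\Delta,\psi,\omega)$ alone (no $\varepsilon$, no normalization of $\Delta(e_1)$), record the nontrivial solutions, and only then quotient by the isomorphism relation using~\eqref{c.5} together with the coalgebra intertwining conditions. Your anticipated obstacles about branching and bookkeeping are accurate, but they apply to this corrected system, not the over-constrained one you wrote down.
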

\begin{enumerate}
\item [$1.$]$\Delta(e_1)=e_1\otimes e_2+e_2\otimes e_2,\,\Delta(e_2)=e_1\otimes e_1-e_2\otimes e_2,\,\psi(e_1)=-e_1,\,\psi(e_2)=e_2,\,
\omega(e_1)=-e_1,\,\omega(e_2)=e_2;$
\item[$2.$]$\Delta(e_1)=e_1\otimes e_1,\,\Delta(e_2)=e_1\otimes e_1+e_1\otimes e_2+e_2\otimes e_1+e_2\otimes e_2,\,\psi(e_1)=e_1,\,
\omega(e_1)=e_1,\,\omega(e_2)=e_2;$
\item[$3.$]$\Delta(e_1)=e_1\otimes e_1-e_1\otimes e_2-e_2\otimes e_1+e_2\otimes e_2,\,\Delta(e_2)=e_1\otimes e_1-e_1\otimes e_2-e_2\otimes e_1+e_2\otimes e_2,\,\quad\psi(e_1)=e_1,\,\psi(e_2)=e_2,\,\omega(e_1)=e_1-e_2,\,\omega(e_2)=e_1-e_2;$
\item[$4.$]$\Delta(e_1)=e_1\otimes e_1-e_1\otimes e_2-e_2\otimes e_1+e_2\otimes e_2,\,\Delta(e_2)=e_1\otimes e_1-e_1\otimes e_2-e_2\otimes e_1+e_2\otimes e_2,\,\psi(e_1)=e_1-e_2,\,\psi(e_2)=e_1-e_2,\,\omega(e_1)=e_1,\,\omega(e_2)=e_2;$
\item[$5.$]$\Delta(e_1)=e_1\otimes e_1-e_1\otimes e_2-e_2\otimes e_1+e_2\otimes e_2,\,\Delta(e_2)=e_1\otimes e_1-e_1\otimes e_2-e_2\otimes e_1+e_2\otimes e_2,\,\psi(e_1)=e_1-e_2,\,\psi(e_2)=e_1-e_2,\,\omega(e_1)=e_1-e_2,\,\omega(e_2)=e_1-e_2;$
\item[$6.$]$\Delta(e_1)=e_1\otimes e_1-e_1\otimes e_2-e_2\otimes e_1+e_2\otimes e_2,\,\Delta(e_2)=e_1\otimes e_1-e_1\otimes e_2-e_2\otimes e_1+e_2\otimes e_2,\,\psi(e_1)=-e_1+e_2,\,\psi(e_2)=-e_1+e_2,\,\omega(e_1)=e_1,\,\omega(e_2)=e_2;$
\item[$7.$]$\Delta(e_1)=e_1\otimes e_1-e_1\otimes e_2-e_2\otimes e_1+e_2\otimes e_2,\,\Delta(e_2)=e_1\otimes e_1-e_1\otimes e_2-e_2\otimes e_1+e_2\otimes e_2,\,\psi(e_1)=e_1,\,\psi(e_2)=e_2;$
\item[$8.$]$\Delta(e_1)=e_1\otimes e_1+e_1\otimes e_2+e_2\otimes e_1+e_2\otimes e_2,\,\Delta(e_2)=e_1\otimes e_1+e_1\otimes e_2+e_2\otimes e_1+e_2\otimes e_2,\,\psi(e_1)=e_1-e_2,\,\psi(e_2)=-e_1+e_2;$
	\item[$9.$]$\Delta(e_1)=e_1\otimes e_1+e_1\otimes e_2+e_2\otimes e_1+e_2\otimes e_2,\,\Delta(e_2)=e_1\otimes e_1+e_1\otimes e_2+e_2\otimes e_1+e_2\otimes e_2,\,\psi(e_1)=e_1-e_2,\,\psi(e_2)=-e_1+e_2,\,\omega(e_1)=e_1-e_2,\,\omega(e_2)=-e_1+e_2;$
	\item[$10.$]$\Delta(e_1)=e_1\otimes e_1+e_1\otimes e_2+e_2\otimes e_1+e_2\otimes e_2,\,\Delta(e_2)=e_1\otimes e_1+e_1\otimes e_2+e_2\otimes e_1+e_2\otimes e_2,\,\omega(e_1)=e_1+e_2,\,\omega(e_2)=e_1+e_2.$
\end{enumerate}
\begin{remark}
There is no BiHom-Bialgabra whose underlying BiHom-associative algebra is given by $\mathcal{H}_{1}^2$.
\end{remark}
\begin{theorem}
The set of $2$-dimensional unital BiHom-Bialgebras yields two non-isomorphic algebras. Let $\left\{e_1,e_2\right\}$ be a basis of 
$\mathbb{K}^2$, then the unital BiHom-Bialgebras are given by the following non-trivial comultiplications.  
\end{theorem}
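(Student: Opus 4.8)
The plan is to proceed exactly as in the proofs of the previous classification results: take the finite list of $2$-dimensional unital BiHom-associative algebras already obtained in Theorem~\ref{TheoII} (the four algebras $\mathcal{H}u^2_1,\dots,\mathcal{H}u^2_4$ of Table~2), and for each of them determine all BiHom-coassociative coalgebra structures $(\mathbb{K}^2,\Delta,\psi,\omega)$ together with a counit $\varepsilon$ that are compatible with the fixed multiplication in the sense of the definition of a unital, counital BiHom-bialgebra. Since $e_1$ is required to be the unit, the unital/counital axioms immediately pin down $\Delta(e_1)=e_1\otimes e_1$, $\varepsilon(e_1)=1$, $\psi(e_1)=e_1$, $\omega(e_1)=e_1$, together with $\varepsilon\circ\alpha=\varepsilon\circ\beta=\varepsilon$ and the multiplicativity of $\varepsilon$; this leaves only $\Delta(e_2)$, $\psi(e_2)$, $\omega(e_2)$ and the scalar $\varepsilon(e_2)$ as unknowns.

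Concretely, I would write $\Delta(e_2)=D^{11}e_1\otimes e_1+D^{12}e_1\otimes e_2+D^{21}e_2\otimes e_1+D^{22}e_2\otimes e_2$, $\psi(e_2)=\xi_{12}e_1+\xi_{22}e_2$, $\omega(e_2)=\gamma_{12}e_1+\gamma_{22}e_2$, and substitute these, together with the known structure constants $\mathcal{C}^k_{ij}$, $a_{ji}$, $b_{ji}$ of each $\mathcal{H}u^2_i$, into the full polynomial system collecting: (i) the BiHom-coassociativity equations; (ii) the multiplicativity constraints $\alpha\psi=\psi\alpha$, $\alpha\omega=\omega\alpha$, $\beta\psi=\psi\beta$, $\beta\omega=\omega\beta$, $(\alpha\otimes\alpha)\circ\Delta=\Delta\circ\alpha$, $(\beta\otimes\beta)\circ\Delta=\Delta\circ\beta$; (iii) the Hopf compatibilities $\Delta(hh')=h_1h'_1\otimes h_2h'_2$ and $\psi(hh')=\psi(h)\psi(h')$, $\omega(hh')=\omega(h)\omega(h')$; and (iv) the counit equations. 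This is a finite polynomial system in finitely many unknowns over an algebraically closed field of characteristic $0$, so I would hand it to a computer algebra system (a Gröbner basis / elimination computation, as announced in the text) and enumerate the finitely many solution families for each of the four base algebras; as in the non-unital case, one expects several of the $\mathcal{H}u^2_i$ to admit no compatible structure at all.

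Having obtained the raw list of admissible quadruples $(\Delta,\psi,\omega,\varepsilon)$, the next step is to reduce modulo isomorphism. In the unital setting an isomorphism of unital BiHom-bialgebras is a unital BiHom-associative algebra isomorphism $\varphi$ that in addition satisfies $(\varphi\otimes\varphi)\circ\Delta_A=\Delta_B\circ\varphi$, $\psi_B\circ\varphi=\varphi\circ\psi_A$, $\omega_B\circ\varphi=\varphi\circ\omega_A$ and $\varepsilon_B\circ\varphi=\varepsilon_A$; in particular $\varphi(e_1)=e_1$, so $\varphi$ ranges over the small (one-parameter) subgroup of $GL_2(\mathbb{K})$ fixing $e_1$, intersected with the stabilizer of the given multiplication. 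For each pair of candidate coalgebra structures sitting over the same $\mathcal{H}u^2_i$ I would test whether such a $\varphi$ carries one to the other by solving the resulting (linear and quadratic) equations in the entries of $\varphi$; collapsing the orbits this way should leave exactly the two listed representatives, and one must finally verify that these two are genuinely non-isomorphic, including checking that no admissible $\varphi$ exists between structures over different $\mathcal{H}u^2_i$.

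The main obstacle will be the elimination step: the Hopf compatibility $\Delta(xy)=x_1y_1\otimes x_2y_2$ is quadratic in the $D_i^{jk}$ and, once expanded against the structure constants of each base algebra, yields a sizeable coupled system, so care is needed in setting up the index conventions correctly and in ensuring that the computer algebra system does not drop a solution branch when it case-splits on the vanishing of leading coefficients. A secondary delicate point is the isomorphism reduction itself: because the restricted automorphism group in the unital case is small, one must be careful not to over-merge orbits, while still confirming that every apparent coincidence between solution families is realized by an honest unital BiHom-bialgebra morphism.
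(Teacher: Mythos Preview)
Your proposal is essentially the same approach as the paper: the paper does not give a written argument but explicitly credits a computer algebra computation (``Thanks to computer Hom-algebra, we obtain the following\ldots''), solving for each base algebra $\mathcal{H}u^2_i$ the polynomial system for $(\Delta,\psi,\omega,\varepsilon)$ and then listing the resulting comultiplications $\Delta^2_{i,j}$, together with the remark that $\mathcal{H}u^2_3$ admits no compatible structure. One caveat: the phrase ``two non-isomorphic algebras'' in the theorem statement appears to be a slip repeated verbatim across several theorems in the paper, since the ensuing list actually contains ten structures over $\mathcal{H}u^2_1$, $\mathcal{H}u^2_2$, $\mathcal{H}u^2_4$; so when you carry out the computation and the isomorphism reduction, you should not expect to land on exactly two representatives.
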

\begin{enumerate}
\item[$1.$]$\Delta^2_{1,1}(e_1)=e_1\otimes e_1,\,\Delta^2_{1,1}(e_2)=-e_1\otimes e_1+e_1\otimes e_2+e_2\otimes e_1+e_2\otimes e_2,\,\psi(e_1)=e_1,\,\psi(e_2)=e_2,\,\omega(e_1)=e_1,\\ \omega(e_2)=e_2,\,\varepsilon(e_1)=1,\,\varepsilon(e_2)=2;$
\item[$2.$]$\Delta^2_{1,2}(e_1)=e_1\otimes e_1,\,\Delta^2_{1,2}(e_2)=-e_1\otimes e_1+e_1\otimes e_2+e_2\otimes e_1+e_2\otimes e_2,\,\psi(e_1)=e_1,\,\psi(e_2)=e_1,\,\omega(e_1)=e_1,\\ \omega(e_2)=e_1,\,\varepsilon(e_1)=1,\,\varepsilon(e_2)=2;$
\item[$3.$]$\Delta^2_{1,3}(e_1)=e_1\otimes e_1,\,\Delta^2_{1,3}(e_2)=e_1\otimes e_1+e_1\otimes e_2+e_2\otimes e_1-e_2\otimes e_2,\,\psi(e_1)=e_1,\,\psi(e_2)=e_1,\,\omega(e_1)=e_1,\\ \omega(e_2)=e_1,\,\varepsilon(e_1)=1,\,\varepsilon(e_2)=-1;$
\item[$4.$]$\Delta^2_{1,4}(e_1)=e_1\otimes e_1,\,\Delta^2_{1,4}(e_2)=e_1\otimes e_1+e_1\otimes e_2+e_2\otimes e_1-e_2\otimes e_2,\,\psi(e_1)=e_1,\,\psi(e_2)=-e_1,\,\omega(e_1)=e_1,\\ \omega(e_2)=-e_1,\,\varepsilon(e_1)=1,\,\varepsilon(e_2)=-1;$
\item[$5.$]$\Delta^2_{2,1}(e_1)=e_1\otimes e_1,\,\Delta^2_{2,2}(e_2)=-e_1\otimes e_1+e_1\otimes e_2+e_2\otimes e_1+e_2\otimes e_2,\,
\psi(e_1)=e_1,\,\psi(e_2)=e_1,\,\omega(e_1)=e_1,\\ \omega(e_2)=e_1,\,\varepsilon(e_1)=1,\,\varepsilon(e_2)=1;$
\item[$6.$]$\Delta^2_{2,2}(e_1)=e_1\otimes e_1,\,\Delta^2_{2,3}(e_2)=e_1\otimes e_1+e_1\otimes e_2+e_2\otimes e_1-e_2\otimes e_2,\,
\psi(e_1)=e_1,\,\psi(e_2)=e_2,\,\omega(e_1)=e_1,\\ \omega(e_2)=e_2,\,\varepsilon(e_1)=1,\,\varepsilon(e_2)=1;$
\item[$7.$]$\Delta^2_{4,1}(e_1)=e_1\otimes e_1,\,\Delta^2_{3,1}(e_2)=e_2\otimes e_2,\,\psi(e_1)=e_1,\,\psi(e_2)=e_2,\,\omega(e_1)=e_1,\,
\omega(e_2)=e_2,\,\varepsilon(e_1)=1,\,\varepsilon(e_2)=1;$
\item[$8.$]$\Delta^2_{4,2}(e_1)=e_1\otimes e_1,\,\Delta^2_{4,2}(e_2)=e_2\otimes e_2,\,\psi(e_1)=e_1,\,\psi(e_2)=e_1,\,\omega(e_1)=e_1,\,
\omega(e_2)=e_1,\,\varepsilon(e_1)=1,\,\varepsilon(e_2)=1;$
\item[$9.$]$\Delta^2_{4,3}(e_1)=e_1\otimes e_1,\,\Delta^2_{4,3}(e_2)=e_1\otimes e_2+e_2\otimes e_1-2e_2\otimes e_2,\,\psi(e_1)=e_1,\,\psi(e_2)=e_2,\,\omega(e_1)=e_1,\,\omega(e_2)=e_2,\\ \varepsilon(e_1)=1,\,\varepsilon(e_2)=1;$
\item[$10.$]$\Delta^2_{4,4}(e_1)=e_1\otimes e_1,\,\Delta^2_{4,4}(e_2)=e_1\otimes e_2+e_2\otimes e_1-e_2\otimes e_2,\,\psi(e_1)=e_1,\,\psi(e_2)=e_1,\,\omega(e_1)=e_1,\,\omega(e_2)=e_1,\\ \varepsilon(e_1)=1;$
\end{enumerate}
\begin{remark}
There is no BiHom-Bialgabra whose underlying BiHom-associative algebra is given by $\mathcal{H}u_3^2.$ 
\end{remark}

\subsection{Classification in Dimension 3 in $\mathcal{H}_i^3$}\ \\
Thanks to computer Hom-algebra, we obtain the following Hom-coalgebras associated to the previons Hom algebras in order to obtain a 
Hom-bialgebra structures. We denote the comultiplications by $\Delta^3_{i,j}$, where $i$ indicates the item of the multiplication and $j$
the item of comultiplication.
\begin{theorem}
The set of $3$-dimensional BiHom-Bialgebras yields two non-isomorphic algebras. Let $\left\{e_1,e_2,e_3\right\}$ be a basis of 
$\mathbb{K}^3$, then the BiHom-Bialgebras are given by the following non-trivial comultiplications.  
\end{theorem}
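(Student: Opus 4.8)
The plan is to build the classification on top of the list of $3$-dimensional BiHom-associative algebras $\mathcal{H}^3_1,\dots,\mathcal{H}^3_{13}$ already established. Fix one of them, say $(A,\mu,\alpha,\beta)=\mathcal{H}^3_i$, so that the structure constants $\mathcal{C}^k_{ij}$, $a_{ji}$, $b_{ji}$ are now prescribed numbers. A BiHom-bialgebra structure extending it amounts to a comultiplication $\Delta$, with $27$ unknown constants $D_i^{jk}$, together with twisting maps $\psi,\omega$, carrying $18$ unknown constants $\xi_{ji},\gamma_{ji}$, subject to three families of constraints: (i) the BiHom-coassociativity and multiplicativity relations for $(\Delta,\psi,\omega)$; (ii) the commutation relations tying $\psi,\omega$ to $\alpha,\beta$, together with $(\alpha\otimes\alpha)\circ\Delta=\Delta\circ\alpha$, $(\beta\otimes\beta)\circ\Delta=\Delta\circ\beta$, $\psi(hh')=\psi(h)\psi(h')$ and $\omega(hh')=\omega(h)\omega(h')$; and (iii) the fundamental compatibility, written in Sweedler notation as $\Delta(hh')=h_1h'_1\otimes h_2h'_2$. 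Writing (i)--(iii) out in coordinates produces, for each $i$, a finite polynomial system in these $45$ unknowns, of exactly the type displayed in the previous section.

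Next I would solve each of the thirteen systems with a computer algebra package, for instance by computing a Gr\"obner basis and decomposing the zero set into irreducible components, parametrizing explicitly any component of positive dimension. For most of the $\mathcal{H}^3_i$ the system either is inconsistent or forces $\Delta=0$, so no nontrivial BiHom-bialgebra arises; only for a few underlying algebras does a short list of triples $(\Delta,\psi,\omega)$ with $\Delta\neq 0$ survive, and these are precisely the candidates recorded in the statement through their non-trivial comultiplications.

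The final step is to quotient the candidate list by isomorphism. A BiHom-bialgebra isomorphism is a $\varphi\in GL_3(\mathbb{K})$ that simultaneously intertwines $\mu$ with $\mu'$, the pair $(\alpha,\beta)$ with $(\alpha',\beta')$, the comultiplication $\Delta$ with $\Delta'$, and the pair $(\psi,\omega)$ with $(\psi',\omega')$; writing $\varphi(e_i)=\sum_p d_{pi}e_p$ this is again a polynomial system in the $d_{pi}$, now heavily constrained since the multiplicative data is already pinned down. Applying the transport-of-structure action of $GL_3(\mathbb{K})$ to the candidates and testing pair by pair whether such a $\varphi$ exists, I expect the list to collapse to exactly two classes, which is the assertion of the theorem.

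I expect the main obstacle to be the combined size and bookkeeping of this scheme rather than any single computation: several of the defining systems have positive-dimensional solution varieties, so the families of compatible coalgebra structures must be described carefully before dividing out $GL_3(\mathbb{K})$, and one then has to check both that the two chosen representatives are genuinely non-isomorphic and that no compatible structure on any $\mathcal{H}^3_i$ was overlooked, so that the classification is exhaustive. Reusing the multiplicative classification as input is what keeps the problem tractable, exactly as in the dimension-$2$ case handled above.
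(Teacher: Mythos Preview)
Your approach is essentially the paper's own: for each of the thirteen underlying BiHom-associative algebras $\mathcal{H}^3_i$, write the coalgebra and compatibility axioms as a polynomial system in the $D_i^{jk},\xi_{ji},\gamma_{ji}$, solve by computer algebra, and then reduce modulo the transport-of-structure action. One caution on your final paragraph: the word ``two'' in the theorem statement appears to be a copy-paste artefact recurring across all the classification theorems in this section---the paper in fact records nine nontrivial comultiplications (and notes separately that $\mathcal{H}^3_3$ and $\mathcal{H}^3_5$ admit none), so you should not expect, nor force, the list to collapse to exactly two isomorphism classes.
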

\begin{enumerate}
\item[$1.$]$\Delta(e_1)=e_1\otimes e_1,\,\Delta(e_2)=e_1\otimes e_1+e_1\otimes e_2-e_1\otimes e_3+e_2\otimes e_1+e_2\otimes e_3-e_3\otimes e_1-e_3\otimes e_3,$\\ $\Delta(e_3)=e_3\otimes e_3,\,\psi(e_1)=e_1,\,\psi(e_2)=e_1+e_2,\,\psi(e_3)=e_3,\,\omega(e_1)=e_1,\,\omega(e_2)=e_1+e_2,\,
\omega(e_3)=e_3;$
\item[$2.$]$\Delta(e_1)=e_1\otimes e_1,\,\Delta(e_2)=e_1\otimes e_1+e_1\otimes e_2+e_1\otimes e_3+e_2\otimes e_1-e_2\otimes e_2+e_2\otimes e_3-e_3\otimes e_1+e_3\otimes e_2$\\ $\Delta(e_3)=e_3\otimes e_3,\,\psi(e_1)=e_1,\,\psi(e_2)=e_1,\,\psi(e_3)=e_3,\,\omega(e_1)=e_1,\,\omega(e_2)=e_1,\,\omega(e_3)=e_3;$
\item[$3.$]$\Delta(e_1)=e_1\otimes e_1+e_3\otimes e_3,\quad\Delta(e_2)=e_1\otimes e_2+e_2\otimes e_1,\quad\Delta(e_3)=e_1\otimes e_3+e_3\otimes e_1,\quad \psi(e_1)=e_1,\\ \psi(e_3)=e_3,\quad\omega(e_1)=e_1,\quad\omega(e_3)=e_3;$
\item[$4.$]$\Delta(e_1)=e_1\otimes e_1+e_3\otimes e_3,\quad\Delta(e_2)=e_1\otimes e_2+e_2\otimes e_1,\quad\Delta(e_3)=-e_1\otimes e_3-e_3\otimes e_1,\quad \psi(e_1)=e_1,\\ \psi(e_3)=e_3,\quad \omega(e_1)=e_1,\quad\omega(e_3)=e_3;$
\item[$5.$]$\Delta(e_1)=e_1\otimes e_1,\,\,\Delta(e_2)=e_1\otimes e_2+e_2\otimes e_1+e_2 \otimes e_2,\,\,\Delta(e_3)=e_1\otimes e_3+
e_2\otimes e_3+e_3\otimes e_1+e_3\otimes e_2-e_3\otimes e_3,\\ \psi(e_1)=e_1,\quad \omega(e_1)=e_1;$
\item[$6.$]$\Delta(e_1)=e_1\otimes e_1,\,\,\Delta(e_2)=e_1\otimes e_2+e_2\otimes e_1+e_2 \otimes e_2,\,\,\Delta(e_3)=e_1\otimes e_3+
e_2\otimes e_3+e_3\otimes e_1+e_3\otimes e_2-e_3\otimes e_3,\\ \psi(e_1)=e_1,\quad\psi(e_2)=e_2,\,\omega(e_1)=e_1,\quad \omega(e_2)=e_2;$
\item[$7.$]$\Delta(e_1)=0,\quad\Delta(e_2)=-e_1\otimes e_1+e_1\otimes e_2+e_2 \otimes e_1,\quad\Delta(e_3)=e_1\otimes e_3+e_3\otimes e_1,\, \psi(e_1)=e_1,\,\,\psi(e_2)=e_1-e_2,\,\psi(e_3)=-e_3,\,\,\omega(e_1)=e_1,\quad\omega(e_2)=e_1-e_2,\,\,\omega(e_3)=-e_3;$
\item[$8.$]$\Delta(e_1)=0,\quad\Delta(e_2)=-e_1\otimes e_1+e_1\otimes e_2+e_2 \otimes e_1,\quad\Delta(e_3)=e_1\otimes e_3+e_3\otimes e_1,\, \psi(e_1)=e_1,\,\,\psi(e_2)=e_2,\,\psi(e_3)=e_3,\,\,\omega(e_1)=e_1,\quad\omega(e_2)=e_1-e_2,\,\, \omega(e_3)=-e_3;$
\item[$9.$]$\Delta(e_1)=0,\quad\Delta(e_2)=-e_1\otimes e_1+e_1\otimes e_2+e_2 \otimes e_1,\quad\Delta(e_3)=e_1\otimes e_3+e_3\otimes e_1,\, \psi(e_1)=e_1,\,\,\psi(e_2)=e_1-e_2,\,\psi(e_3)=-e_3,\,\,\omega(e_1)=e_1,\quad\omega(e_2)=e_2,\,\,\omega(e_3)=e_3.$
\end{enumerate}

\begin{remark}
There is no BiHom-Bialgabra whose underlying BiHom-associative algebra is given by $\mathcal{H}_3^3,\,\mathcal{H}_5^3.$
\end{remark}

\begin{theorem}
The set of $3$-dimensional unital BiHom-Bialgebras yields two non-isomorphic algebras. Let $\left\{e_1,e_2,e_3\right\}$ be a basis of 
$\mathbb{K}^3$, then the unital BiHom-Bialgebras are given by the following non-trivial comultiplications.  
\end{theorem}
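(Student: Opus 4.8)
The plan is to proceed exactly as in the proof of the preceding classification theorems: start from the list $\mathcal{H}u^3_1,\dots,\mathcal{H}u^3_{13}$ of $3$-dimensional unital BiHom-associative algebras obtained above, and for each of them determine all compatible counital BiHom-coassociative coalgebra structures, thereby producing every $3$-dimensional unital and counital BiHom-bialgebra; the final step is to sort the resulting list into isomorphism classes and exhibit the normal forms.

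First, fix one representative $\mathcal{H}u^3_i=(A,\mu,\alpha,\beta)$ with $e_1$ the unit, so that the structure constants $\mathcal{C}^\ell_{jk}$, $a_{jk}$, $b_{jk}$ are known numbers. The unknowns are the constants $D_i^{jk}$ of $\Delta$, the constants $\xi_{ji}$, $\gamma_{ji}$ of $\psi,\omega$, and the constants $\zeta_i$ of $\varepsilon$. Imposing in turn (i) that $(A,\Delta,\psi,\omega)$ be a counital BiHom-coassociative coalgebra (the coalgebra system together with the counit relations), (ii) that $\psi,\omega$ commute with $\alpha,\beta$ and be algebra morphisms, (iii) the Hopf compatibility $\Delta(hh')=h_1h'_1\otimes h_2h'_2$ together with $(\alpha\otimes\alpha)\circ\Delta=\Delta\circ\alpha$ and $(\beta\otimes\beta)\circ\Delta=\Delta\circ\beta$, and (iv) the unit/counit normalisations $\Delta(e_1)=e_1\otimes e_1$, $\varepsilon(e_1)=1$, $\psi(e_1)=\omega(e_1)=e_1$, $\varepsilon\circ\alpha=\varepsilon\circ\beta=\varepsilon$ --- all of these translate into the polynomial system displayed before the theorem. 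Solving this system by Gr\"obner-basis elimination with a computer algebra system yields, for each $i$, the finite family of admissible tuples $(D_i^{jk},\xi_{ji},\gamma_{ji},\zeta_i)$; several of the $\mathcal{H}u^3_i$ will admit only the degenerate solution and must be discarded, exactly as in the remark following the $2$-dimensional statement.

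Next, having assembled the full list of candidate $3$-dimensional unital BiHom-bialgebras, I would eliminate redundancies. Two such bialgebras are isomorphic iff there is $\varphi\in GL_3(\K)$ with $\varphi(e_1)=e_1$ simultaneously intertwining $(\mu,\alpha,\beta)$ and $(\Delta,\psi,\omega)$; writing $\varphi(e_i)=\sum_p d_{pi}e_p$, this is the combination of the isomorphism system for BiHom-associative algebras and the one for BiHom-coassociative coalgebras displayed above. For each pair of surviving candidates one checks whether this system in the $d_{pi}$ is solvable; the tuples listed in the statement are the chosen orbit representatives, and pairwise non-isomorphism is verified by exhibiting an obstruction, for instance the rank of $\Delta$, the spectra of $\psi$ and $\omega$, or the dimension of the stabiliser.

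The main obstacle is twofold. On the computational side, the compatibility system couples the unknowns $D_i^{jk}$ quadratically both with the coalgebra maps and with the already-fixed algebra constants, so the Gr\"obner bases can be sizeable and produce many spurious or parameter-dependent branches that must be resolved by hand. On the conceptual side, the delicate point is showing that the short final list is \emph{exhaustive and irredundant}: one must be certain that no admissible solution of any of the thirteen systems has been overlooked and that the isomorphism system genuinely has no solution between the retained representatives --- this is where the invariants mentioned above do the essential work.
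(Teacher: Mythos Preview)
Your proposal is correct and follows essentially the same approach as the paper: both start from the list $\mathcal{H}u^3_i$ of unital BiHom-associative algebras, impose the BiHom-coalgebra and compatibility conditions as a polynomial system in the unknown structure constants $(D_i^{jk},\xi_{ji},\gamma_{ji},\zeta_i)$, and solve by computer algebra to produce the comultiplications listed. The paper offers no written argument beyond the sentence ``Thanks to computer Hom-algebra, we obtain the following\ldots'', so your description is in fact a more explicit account of the computation the paper performs; your additional remarks on Gr\"obner bases and on distinguishing isomorphism classes via invariants (rank of $\Delta$, spectra of $\psi,\omega$) go beyond what the paper records but are entirely in its spirit.
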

\begin{enumerate}
\item[$1.$]$\Delta^3_{2,1}(e_1)=e_1\otimes e_1,\,\Delta(e_2)=-e_1\otimes e_1+e_1\otimes e_2+e_2\otimes e_1,\,
\Delta(e_3)=e_1\otimes e_1-e_1\otimes e_2+2e_1\otimes e_3-e_2\otimes e_1+e_2\otimes e_2-e_2\otimes e_2+2e_3\otimes e_1-e_3\otimes e_2+e_3\otimes e_3,\,\psi(e_1)=e_1,\,\psi(e_2)=e_1,\,\omega(e_1)=e_1,\,\omega(e_2)=e_1,\,\varepsilon(e_1)=\varepsilon(e_2)=1;$
\item[$2.$]$\Delta^3_{3,1}(e_1)=e_1\otimes e_1,\,\Delta(e_2)=e_1\otimes e_1+e_1\otimes e_2+e_1\otimes e_3+e_2\otimes e_1-e_2\otimes e_2-e_2\otimes e_3-e_3\otimes e_1+e_3\otimes e_2,\,\Delta(e_3)=-ae_3\otimes e_3,\,\psi(e_1)=e_1,\,\psi(e_2)=e_2,\,\psi(e_3)=e_3,\,\omega(e_1)=e_1,\,\omega(e_2)=e_2,\,\omega(e_3)=e_3,\,\varepsilon(e_1)=\varepsilon(e_2)=1;$
\item[$3.$]$\Delta^3_{4,1}(e_1)=e_1\otimes e_1,\,\Delta(e_2)=e_2\otimes e_2,\,\Delta(e_3)=e_1\otimes e_3+e_3\otimes e_1+e_3\otimes e_3,\,\psi(e_1)=e_1,\,\psi(e_2)=e_2,\,\psi(e_3)=e_3,\,\omega(e_1)=e_1,\,\omega(e_2)=e_2,\,\omega(e_3)=e_3,\,\varepsilon(e_1)=\varepsilon(e_2)=1;$
\item[$4.$]$\Delta^3_{12,1}(e_1)=e_1\otimes e_1,\,\Delta(e_2)=e_1\otimes e_2+e_2\otimes e_1,\,\Delta(e_3)=e_1\otimes e_3+e_3\otimes e_1,\,
\psi(e_1)=e_1,\,\omega(e_1)=e_1,\,\varepsilon(e_1)=1;$
\item[$5.$]$\Delta^3_{15,1}(e_1)=e_1\otimes e_1,\,\Delta(e_2)=e_1\otimes e_1+e_1\otimes e_2-e_1\otimes e_3+e_2\otimes e_1-e_2\otimes e_3-e_3\otimes e_1-e_3\otimes e_2+2e_3\otimes e_3,\,\Delta(e_3)=-e_2\otimes e_2+e_2\otimes e_3+e_3\otimes e_2,\,\psi(e_1)=e_1,\,\psi(e_3)=e_1,\,
\omega(e_1)=e_1,\,\omega(e_3)=e_1,\,\varepsilon(e_1)=\varepsilon(e_3)=1;$
\item[$6.$]$\Delta^3_{15,2}(e_1)=e_1\otimes e_1,\,\Delta(e_2)=e_1\otimes e_2+e_2\otimes e_1-ae_2\otimes e_2-e_2\otimes e_3-e_3\otimes e_2,\,
\Delta(e_3)=e_1\otimes e_3+be_2\otimes e_2-e_3\otimes e_1-e_3\otimes e_3,\,\psi(e_1)=e_1,\,\psi(e_2)=ce_2,\,\psi(e_3)=de_2+e_3,\,
\omega(e_1)=e_1,\,\omega(e_2)=ce_2,\,\omega(e_3)=de_2+e_3,\,\varepsilon(e_1)=1;$
\item[$7.$]$\Delta^3_{20,1}(e_1)=e_1\otimes e_1,\,\Delta(e_2)=e_2\otimes e_2,\,\Delta(e_3)=e_1\otimes e_3+e_3\otimes e_1+e_3\otimes e_3,\,\psi(e_1)=e_1,\,,\,\psi(e_2)=e_2,\,\psi(e_3)=e_3,\,\omega(e_1)=e_1,\,\omega(e_2)=e_2,\,\omega(e_3)=e_3,\,\varepsilon(e_1)=\varepsilon(e_2)=1;$
\item[$8.$]$\Delta^3_{21,1}(e_1)=e_1\otimes e_1,\,\Delta(e_2)=e_1\otimes e_2+e_2\otimes e_1+e_2\otimes e_2,\,\Delta(e_3)=e_3\otimes e_3,\,\psi(e_1)=e_1,\,\,\psi(e_2)=e_2,\,\omega(e_1)=e_1,\,\omega(e_2)=e_2,\,\varepsilon(e_1)=1,\,\varepsilon(e_2)=1;$
\item[$9.$]$\Delta^3_{21,2}(e_1)=e_1\otimes e_1,\,\Delta(e_2)=e_1\otimes e_2+e_2\otimes e_1,\,\Delta(e_3)=e_1+\otimes e_3+e_3\otimes e_1,\,\psi(e_1)=e_1,\,\omega(e_1)=e_1,\,\varepsilon(e_1)=1;$
\item[$10.$]$\Delta^3_{22,1}(e_1)=e_1\otimes e_1,\,\Delta(e_2)=e_1\otimes e_2+e_2\otimes e_1+e_2\otimes e_2+e_2\otimes e_3+e_3\otimes e_2,\,
\Delta(e_3)=e_1+\otimes e_3+e_3\otimes e_1+e_3\otimes e_3,\,\psi(e_1)=e_1,\,\psi(e_3)=e_3,\,\omega(e_1)=e_1,\,\omega(e_3)=e_3,\,
\varepsilon(e_1)=1;$
\item[$11.$]$\Delta^3_{22,2}(e_1)=e_1\otimes e_1,\,\Delta(e_2)=e_2\otimes e_2,\,\Delta(e_3)=e_1+\otimes e_3+e_3\otimes e_1+e_3\otimes e_3,\,\psi(e_1)=e_1,\,\omega(e_1)=e_1,\,\varepsilon(e_1)=1;$
\item[$12.$]$\Delta^3_{22,3}(e_1)=e_1\otimes e_1,\,\Delta(e_2)=e_2\otimes e_2,\,\Delta(e_3)=e_1+\otimes e_3+e_3\otimes e_1+e_3\otimes e_3,\,
\psi(e_1)=e_1,\,\psi(e_3)=e_3,\,\omega(e_1)=e_1,\,\omega(e_3)=e_3,\,\varepsilon(e_1)=1;$
\item[$13.$]$\Delta^3_{22,4}(e_1)=e_1\otimes e_1,\,\Delta(e_2)=e_2\otimes e_2,\,\Delta(e_3)=e_1+\otimes e_3+e_3\otimes e_1+e_3\otimes e_3,\,
\psi(e_1)=e_1,\,\psi(e_2)=e_2,\,\psi(e_3)=e_3,\,\omega(e_1)=e_1,\,\omega(e_2)=e_2,\,\omega(e_3)=e_3,\,\varepsilon(e_1)=\varepsilon(e_2)=1.$
\end{enumerate}
\begin{remark}
There is no unital BiHom-Bialgabra whose underlying unital BiHom-associative algebra is given by 
$\mathcal{H}u_1^3,\,\mathcal{H}u_5^3,\,\mathcal{H}u_7^3,\,\mathcal{H}u_{8}^3,\,\mathcal{H}u_{10}^3.$ 
\end{remark}
We introduce the concept of BiHom-Hopf algebras.
\begin{definition}\label{bh1}
Let $(H,\mu,\Delta,\alpha,\beta)$ be a unital and counital BiHom-bialgebra with a unit $1_H$ and a co-unit $\varepsilon_H.$ A linear map 
$S : H\rightarrow H$ is called an antipode if it commutes with all the maps $\alpha,\beta,\psi,\omega$ and it satisfies the relation :
$$
\psi\omega(S(h_1))\alpha\beta(h_2)=\varepsilon_H(h)1_H=\beta\psi(h_1)\alpha\omega(S(h_2)),\quad \forall h\in H.
$$ 
A BiHom-Hopf algebra is a unital and counital BiHom-bialgebra with an antipode.

$\sum_{s=1}^n\sum_{q,r=1}^n\sum_{l,p=1}^n\sum_{j,k=1}^nD^{jk}_iS_{lj}g_{pk}f_{ql}a_{rp}b_{qr}\mathcal{C}^t_{sr}-\xi_{i}=0,\quad \forall\, i,t\in \left\{1,n\right\};$

$\sum_{s=1}^n\sum_{q,r=1}^n\sum_{l,p=1}^n\sum_{j,k=1}^nD^{jk}_if_{lj}S_{lj}b_{ql}g_{rp}a_{sr}\mathcal{C}^t_{qr}-\xi_{i}=0,\quad \forall\, i,t\in \left\{1,n\right\}.$
\end{definition}

\begin{proposition}
The BiHom-bialgebra structures on $\mathbb{K}^2$ which are BiHom-Hopf algebras are given by the following pairs  of multiplication and comultiplication
with the appropriate unit and conits :\\
$(\mathcal{H}u_1^2, \Delta_{1,1}^2),\,(\mathcal{H}u_1^2, \Delta_{1,2}^2),\,(\mathcal{H}u_1^2, \Delta_{1,4}^2),\,(\mathcal{H}u_2^2, \Delta_{2,1}^2),\,(\mathcal{H}u_2^2, \Delta_{2,2}^2),\,(\mathcal{H}u_4^2, \Delta_{4,1}^2),\,(\mathcal{H}u_4^2, \Delta_{4,2}^2),\,(\mathcal{H}u_4^2, \Delta_{4,3}^2),\,(\mathcal{H}u_4^2, \Delta_{4,4}^2).$
\end{proposition}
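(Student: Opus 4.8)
The plan is to reduce the proposition to a finite, mechanical verification starting from the classification of $2$-dimensional unital BiHom-bialgebras already obtained above: for each pair $(\mathcal{H}u_i^2,\Delta^2_{i,j})$ in that list one has to decide whether a linear map $S:\mathbb{K}^2\to\mathbb{K}^2$ exists satisfying the antipode axiom of Definition \ref{bh1}. First I would write the unknown antipode in the fixed basis $\{e_1,e_2\}$ as $S(e_j)=\sum_k S_{kj}e_k$. Since $e_1$ is the unit we have $\Delta(e_1)=e_1\otimes e_1$, $\varepsilon(e_1)=1$, and $S$ must fix the unit, so $S(e_1)=e_1$; moreover $S$ is required to commute with $\alpha,\beta,\psi,\omega$. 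Imposing these commutation relations against the (already known) matrices of the structure maps of each $\mathcal{H}u_i^2$ forces the remaining entries $S_{ij}$ into a low-dimensional family — this is the linear part of the problem and is immediate in each case.

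Next I would substitute $\Delta(e_i)=\sum_{j,k}D_i^{jk}e_j\otimes e_k$ and the structure constants $\mathcal{C}^k_{ij}$ into the two antipode identities
\[
\psi\omega(S(h_1))\,\alpha\beta(h_2)=\varepsilon_H(h)1_H=\beta\psi(h_1)\,\alpha\omega(S(h_2)),
\]
evaluated on $h=e_1$ and $h=e_2$. Expanding the Sweedler notation and collecting coefficients of the basis vectors turns each identity into the polynomial systems displayed in Definition \ref{bh1}, now read as equations in the residual parameters of $S$. For each pair $(\mathcal{H}u_i^2,\Delta^2_{i,j})$ one checks consistency of this system: when it has a solution the pair is a BiHom-Hopf algebra, and when it is inconsistent the pair is discarded. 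Running this over every entry of the classification list (with a computer algebra system, as elsewhere in the paper) leaves exactly the nine pairs $(\mathcal{H}u_1^2,\Delta^2_{1,1})$, $(\mathcal{H}u_1^2,\Delta^2_{1,2})$, $(\mathcal{H}u_1^2,\Delta^2_{1,4})$, $(\mathcal{H}u_2^2,\Delta^2_{2,1})$, $(\mathcal{H}u_2^2,\Delta^2_{2,2})$, $(\mathcal{H}u_4^2,\Delta^2_{4,1})$, $(\mathcal{H}u_4^2,\Delta^2_{4,2})$, $(\mathcal{H}u_4^2,\Delta^2_{4,3})$, $(\mathcal{H}u_4^2,\Delta^2_{4,4})$, which is the asserted list.

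The main obstacle is organizational rather than conceptual: one must carry the elimination through the whole list of comultiplications $\Delta^2_{i,j}$, and in doing so be careful that the twist maps $\psi,\omega,\alpha,\beta$ enter the antipode condition in the asymmetric BiHom fashion, so that the classical shortcut $S*\mathrm{id}=\eta\varepsilon=\mathrm{id}*S$ is unavailable and must be replaced by the twisted convolution of Definition \ref{bh1}. Keeping these twists, together with the normalizations $\Delta(e_1)=e_1\otimes e_1$, $\varepsilon(e_1)=1$ and $S(e_1)=e_1$, consistent throughout every case is where the care is needed; each individual solve, once set up, is small and routine.
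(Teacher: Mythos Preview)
Your proposal is correct and matches the paper's approach: the paper itself gives no explicit proof of this proposition, relying (as it does for all its classification results) on a computer-algebra check of the defining equations, and your sketch is precisely the computational verification one would run---write $S$ in the basis, impose $S(e_1)=e_1$ and commutation with $\alpha,\beta,\psi,\omega$, then test solvability of the twisted antipode identities for each pair $(\mathcal{H}u_i^2,\Delta^2_{i,j})$. There is nothing further to compare.
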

\begin{proposition}
The BiHom-bialgebra structures on $\mathbb{K}^3$ which are BiHom-Hopf algebras are given by the following pairs  of multiplication and comultiplication
with the appropriate unit and conits :\\
$(\mathcal{H}u_3^3, \Delta_{3,1}^3),\,(\mathcal{H}u_4^3, \Delta_{4,1}^3),\,(\mathcal{H}u_{12}^3, \Delta_{12,1}^3),
(\mathcal{H}u_{15}^3, \Delta_{15,1}^3),\,(\mathcal{H}u_{15}^3, \Delta_{15,2}^3),\,(\mathcal{H}u_{21}^3, \Delta_{21,1}^3),\,
(\mathcal{H}u_{21}^3, \Delta_{21,2}^3),\,(\mathcal{H}u_{22}^3, \Delta_{22,1}^3)$,\\ $(\mathcal{H}u_{22}^3, \Delta_{22,2}^3),$
$(\mathcal{H}u_{22}^3, \Delta_{22,3}^3),\,(\mathcal{H}u_{22}^3, \Delta_{22,4}^3).$ 
\end{proposition}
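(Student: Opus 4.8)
The plan is to start from the classification of $3$-dimensional unital and counital BiHom-bialgebras established in the preceding theorem, namely the thirteen pairs $(\mathcal{H}u_i^3,\Delta_{i,j}^3)$ together with their prescribed units $e_1$ and counits $\varepsilon$. For each of these one must decide whether an antipode exists, i.e. whether there is a linear map $S:\mathbb{K}^3\to\mathbb{K}^3$, written $S(e_j)=\sum_l S_{lj}e_l$, that commutes with $\alpha,\beta,\psi,\omega$ and satisfies the convolution identity
$$\psi\omega(S(h_1))\,\alpha\beta(h_2)=\varepsilon_H(h)1_H=\beta\psi(h_1)\,\alpha\omega(S(h_2)),\qquad h\in H,$$
from Definition \ref{bh1}. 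Evaluating this on the basis and expanding $\Delta(e_i)=\sum_{j,k}D_i^{jk}e_j\otimes e_k$ and $\mu(e_p,e_q)=\sum_t\mathcal{C}^t_{pq}e_t$ produces exactly the two polynomial systems in the $S_{lj}$ recorded right after Definition \ref{bh1}, while $S\alpha=\alpha S$, $S\beta=\beta S$, $S\psi=\psi S$, $S\omega=\omega S$ give four further \emph{linear} systems in the nine unknowns $S_{lj}$.

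The procedure for each candidate is then as follows. First impose the four commutation conditions; since $\alpha,\beta,\psi,\omega$ are explicit small matrices, often with distinct eigenvalues, these typically force $S$ to be (block) diagonal and pin down most of its entries. Next substitute the surviving free entries into the two convolution systems and solve the resulting polynomial equations, using that $1_H=e_1$ and $\varepsilon$ are fixed so the right-hand side $\varepsilon(e_i)e_1$ is completely determined. If a common solution $S$ exists, the pair is a BiHom-Hopf algebra and is retained (with $S$ the explicit antipode); if the combined system is inconsistent, the pair is discarded. In every case this is a concrete finite system over $\mathbb{K}$ that can be treated with the same computer-algebra tools as the earlier classifications, a Gr\"obner basis computation deciding solvability.

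The main obstacle is the book-keeping across all thirteen cases, and especially the families carrying free parameters — for instance $\Delta_{3,1}^3$ with its parameter $a$ and $\Delta_{15,2}^3$ with parameters $a,b,c,d$ — where one must determine for which parameter values an antipode exists and check whether the resulting restricted families still appear in the stated list, or collapse, up to the isomorphisms of Proposition \ref{p1}, onto members already present. Equally delicate are the non-existence arguments for the pairs omitted from the list: there one has to show that the combined linear-plus-polynomial system has no solution at all, i.e. that $1$ lies in the corresponding ideal. Once each case is resolved, one assembles the surviving pairs and verifies that they are precisely $(\mathcal{H}u_3^3,\Delta_{3,1}^3)$, $(\mathcal{H}u_4^3,\Delta_{4,1}^3)$, $(\mathcal{H}u_{12}^3,\Delta_{12,1}^3)$, $(\mathcal{H}u_{15}^3,\Delta_{15,1}^3)$, $(\mathcal{H}u_{15}^3,\Delta_{15,2}^3)$, $(\mathcal{H}u_{21}^3,\Delta_{21,1}^3)$, $(\mathcal{H}u_{21}^3,\Delta_{21,2}^3)$ and $(\mathcal{H}u_{22}^3,\Delta_{22,j}^3)$ for $j=1,2,3,4$, which completes the proof.
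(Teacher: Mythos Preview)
Your proposal is correct and follows the same approach implicit in the paper: the paper gives no explicit argument for this proposition, relying throughout on computer-algebra solutions of the polynomial systems recorded after Definition~\ref{bh1}, and your plan of running through the thirteen classified unital counital BiHom-bialgebras and testing solvability of the antipode equations (together with the four commutation constraints) is exactly this computation spelled out. Your remarks on the parameter families and on certifying non-existence via an ideal membership check are appropriate refinements of what the paper leaves tacit.
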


\end{document}